\setlist[enumerate]{leftmargin=.5in}
\setlist[itemize]{leftmargin=.5in}
\crefname{hypothesis}{Hypothesis}{Hypotheses}
\newcommand{\bq}{\begin{equation}}
\newcommand{\eq}{\end{equation}}
\newcommand{\Laplace}{\Delta}
\newcommand{\cL}{\mathcal{L}}
\newcommand{\cC}{\mathcal{C}}
\newcommand{\dee}{\mathrm{d}}
\newcommand{\m}{\lambda} 
\newcommand{\R}{\mathbb R}
\newcommand{\E}{\mathbb E}
\newcommand{\eps}{\varepsilon}
\newcommand{\Q}{\mathbb Q}
\newcommand{\dH}{d_{\mathrm{H}}}
\newcommand{\la}{\langle}
\newcommand{\ra}{\rangle}
\newcommand{\G}{\mathcal{G}}
\def \P {{\mathbb{P}}}
\DeclareMathOperator*{\argmin}{arg\,min}
\numberwithin{theorem}{section}
\newcommand{\TheTitle}{Stability of Gibbs Posteriors from the Wasserstein Loss for Bayesian Full Waveform Inversion} 
\newcommand{\TheAuthors}{M. M. Dunlop, Y. Yang}
\title{{\TheTitle}}
\author{
  Matthew M. Dunlop\thanks{Courant Institute of Mathematical Sciences, New York University, New York, New York, 10012, USA (\email{matt.dunlop@nyu.edu, yunan.yang@nyu.edu}).}
  \and
  Yunan Yang\footnotemark[1].
}
\begin{document}

\maketitle

\begin{abstract}
Recently, the Wasserstein loss function has been proven to be effective when applied to deterministic full-waveform inversion (FWI) problems. We consider the application of this loss function in Bayesian FWI so that the uncertainty can be captured in the solution. Other loss functions that are commonly used in practice are also considered for comparison. Existence and stability of the resulting Gibbs posteriors are shown on function space under weak assumptions on the prior and model. In particular, the distribution arising from the Wasserstein loss is shown to be quite stable with respect to high-frequency noise in the data. We then illustrate the difference between the resulting distributions numerically, using Laplace approximations to estimate the unknown velocity field and uncertainty associated with the estimates.
\end{abstract}

\begin{keywords}
seismic inversion, Bayesian inference, Wasserstein metric, Gibbs posterior, the prior distribution
\end{keywords}

\begin{AMS}
62C10 (Primary), 86A22 (Secondary), 65J22, 49K40
\end{AMS}

\tableofcontents

\section{Introduction} \label{sec:intro}
Seismic signals generated by natural earthquakes or induced seismicity contain essential information about subsurface properties. Nowadays, vibrations of the earth could be measured on the surface more accurately and more frequently in spatial and time domain, rather than merely the travel time~\cite{virieux2009overview}. The full wavefield could be generated by partial differential equations (PDE) from the acoustic wave equation to 3D elastic wave modeling with attenuation. The state-of-art imaging technique in geophysics is the full-waveform inversion (FWI)~\cite{virieux2009overview}, which seeks the optimal parameter by minimizing the objective function that measures the data mismatch between the recorded true data and the simulated waveforms produced by the current prediction. In the deterministic setting, it is PDE-constrained optimization. 

Most current FWI studies have focused on how to efficiently and accurately solve the data-fitting problem. However, the recorded data often contain various types of noise that affect the accuracy of inversion~\cite{tarantola2005inverse}. For example, common sources of noise include the surface upon which the survey was performed, the instruments of receiving and recording, and the noise generated by the induced seismicity~\cite{oropeza2011simultaneous}. As a result, the resolution analysis and uncertainty quantification of the predictions are essential aspects~\cite{gouveia1998bayesian,fichtner2011resolution}. Bayesian inference provides a systematic way to quantify uncertainties for geophysical inverse problems~\cite{zhu2016bayesian,dashti2016bayesian}. A realistic noise model is an essential a priori for Bayesian inversion of geophysical data, and the likelihood function is also partially determined by choice of the noise model. However, quantification of the noise model is nontrivial, and the common additive Gaussian assumption might not be enough to characterize the real uncertainty~\cite{motamed2019wasserstein}. 

The shape and curvature of the likelihood surface represent information about the stability of the estimates, whose analogy in the deterministic approach of solving FWI is the objective function in PDE-constrained optimization. The oscillatory and periodic nature of waveforms leads to the main challenge of local minima, which can be significantly mitigated by a recently introduced class of objective functions from optimal transport~\cite{EFWass,engquist2016optimal,W1_2D,yang2017application}. Attractive properties such as the convexity and insensitivity to noise have been theoretically studied in~\cite{yangletter, yang2019analysis,ERY2019}. Since optimal transport studies probability measures, the use of the Wasserstein metrics as criteria is natural in statistical inference~\cite{bernton2017inference,sommerfeld2018inference}, particularly in the Bayesian setting~\cite{el2012bayesian}. In~\cite{motamed2019wasserstein}, the quadratic Wasserstein metric was first used in Bayesian seismic inversion as the likelihood function. This work, together with the success of optimal-transport based metrics in deterministic FWI, motivates us to further analyze the Wasserstein metric under the framework of Bayesian seismic inversion~\cite{zhu2016bayesian,izzatullah2019bayesian}. Compared to~\cite{motamed2019wasserstein}, there are three new contributions to this field from our paper. First, we theoretically study the underlying noise model assumption on the data by choosing the quadratic Wasserstein metric as the likelihood function. Second, we prove the existence and stability of the corresponding posterior distributions, which are formally solutions to Bayesian inverse problems. The benefits of choosing the Wasserstein likelihood stand out from the stability estimates because of its robustness to the high-frequency noise. The noise robustness traces back to both the asymptotic connection~\cite{otto2000generalization} and the non-asymptotic connection~\cite{peyre2018comparison} between the quadratic Wasserstein metric and the negative Sobolev seminorm. The conclusion in this paper is consistent with the analysis of using the quadratic Wasserstein metric in general deterministic inverse data matching problems~\cite{ERY2019}. Third, the numerical examples illustrated here consider the nonlinear inversion of the velocity parameter, which is discretized into thousands of variables. The number of unknowns here is much larger than the ones in~~\cite{motamed2019wasserstein}.

The rest of the paper is organized as follows. We first briefly review necessary background knowledge on optimal transport, the quadratic Wasserstein metric, and the related negative Sobolev norm. As the main application of the paper, seismic waveform inversion is briefly introduced, and the major challenges are addressed. The main contribution of this paper is presented in \cref{sec:Bayesian,sec:proofs}, where Bayesian FWI is studied systematically. Starting with basics in Bayesian inversion, we then outline choices of prior distribution appropriate for seismic inversion~\cite{scales2001prior,asnaashari2013regularized}. Next, we consider different likelihoods/potentials and the corresponding data models. We define four potentials ($\Phi_{\cdot}:X\times Y\to\R$) and discuss the corresponding noise models for each one of them:
\begin{align}
\Phi_{L^2}(u;y) &= \frac{1}{2}\int_D \|\G(u)(x,\cdot)-y(x,\cdot)\|_{L^2(T)}^2\,\lambda(\dee x),\label{eq:L2_potential}\\
\Phi_{\dot H^{-1}}(u;y) &= \frac{1}{2}\int_D\|\G(u)(x,\cdot)-y(x,\cdot)\|_{\dot H^{-1}(T)}^2\,\lambda(\dee x),\label{eq:Hm1_potential}\\
\Phi_{M}(u;y) &= \frac{1}{2}\int_D\left\|\frac{(P_\sigma \G(u))(x,\cdot)-(P_\sigma{y})(x,\cdot)}{(P_\sigma y)(x,\cdot)}\right\|_{L^2(T)}^2\,\lambda(\dee x),\label{eq:M_potential}\\
\Phi_{W_2}(u;y) &= \frac{1}{2}\int_D W_2\left((P_\sigma y)(x,\cdot),(P_\sigma\G(u))(x,\cdot)\right)^2\,\lambda(\dee x),\label{eq:W2_potential}
\end{align}
where the choice of norm on $Y$ depends on the choice of potential; here $P_\sigma$ is an operator that maps functions into probability densities.
In the rest of \cref{sec:Bayesian}, we will rigorously define the posterior distributions \cref{eq:bayes_formal}, and study the existence and stability with respect to perturbations to the observed data for the different choices of likelihood. In \cref{thm:existence}, we establish that the posterior measures corresponding to the likelihoods discussed above are well-defined, and in \cref{thm:stability}, we prove that these measures are stable with respect to perturbations of the observed data measured in different norms. The theoretical analysis demonstrates the advantages of choosing Wasserstein-type likelihood. Numerical simulations are shown in \cref{sec:numerical} to demonstrate the main findings of our study. We will present a challenging benchmark in FWI. We can tackle them using proper choices of likelihood and priors by combining the mathematical tool and the physics knowledge of the geophysical problem. Concluding remarks are offered in \cref{sec:conclusion}.

\section{Background} \label{sec:background}
In this section, we present some background material regarding the quadratic Wasserstein metric, the negative Sobolev norm, optimal transport, and the waveform inversion problem. To better motivate our study, we will also briefly mention the deterministic approach that is reformulated as a wave-equation-constrained optimization.

\subsection{The Quadratic Wasserstein Distance and the Negative Sobolev Norm} \label{ssec:W2}
The Wasserstein distance comes from optimal transport, which is a classical subject in mathematical analysis that was first brought up by Monge in 1781~\cite{Monge} and later expanded by Kantorovich in 1940s~\cite{Kantorovich2006}. The core of the subject, i.e., the optimal transport problem, discusses the optimal plan that maps one probability distribution $\nu_1$ on a measure space $X$ into another probability distribution $\nu_2$ on a measure space $Y$, intending to minimize the total transport cost of a given cost function. The transport cost function $c(x,y)$ maps pairs $(x,y) \in X\times Y$ to $\mathbb{R}\cup \{+\infty\}$, which denotes the cost of transporting one unit mass from location $x$ to $y$. If $c(x,y) = |x-y|^p$ for $p \geq 1$, the  optimal transport cost becomes the class of Wasserstein distance:
\begin{definition}[The Wasserstein distance]
  We denote by $\mathscr{P}_p(X)$ the set of probability measures with finite moments of order $p$ on the measure space $X$. For all $p \in [1, \infty)$ and any $\nu_1, \nu_2\in \mathscr{P}_p(X)$, the $p$-Wasserstein distance between $\nu_1$ and $\nu_2$ is defined as
\begin{equation}\label{eq:static}
W_p(\nu_1,\nu_2)=\left( \inf _{T\in \mathcal{M}_{\nu_1,\nu_2}}\int_{X}\left|x-T(x)\right|^p\, \nu_1(\dee x)\right) ^{\frac{1}{p}}
\end{equation}
where $\mathcal{M}_{\nu_1,\nu_2} = \{T:X\to X\text{ measurable}\,|\,T^\sharp \nu_1 = \nu_2\}$\footnote{Here $T^\sharp\nu_1$ denotes the pushforward of $\nu_1$ by the map $T$, i.e. the measure such that $(T^\sharp \nu_1)(A) = \nu_1(T^{-1}(A))$ for all measurable $A\subseteq X$.} is the set of all measure-preserving maps that rearrange the distribution $\nu_1$ into $\nu_2$.
\end{definition}
In this paper, we are interested in studying the case of $p=2$. With a bit of abuse of notation, we also write $W_2(\nu_1,\nu_2) = W_2(f,g)$ where $f,g$ are density functions of the measures $\nu_1$ and $\nu_2$ that are absolutely continuous with respect to the Lebesgue measure: $\dee\nu_1 = f(x)\dee x$, $\dee\nu_2 = g(x)\dee x$.
The quadratic Wasserstein distance ($W_2$) has close connections with the negative Sobolev space $\dot{H}^{-1}$~\cite{otto2000generalization,dolbeault2009new,ERY2019}. We will present the most relevant results regarding this paper that clearly illustrate the close connections. In~\cref{sec:proofs}, we will analyze them both as the choice of the likelihood function in Bayesian inversion.

We first introduce the \textit{weighted} $L^2$, $\dot{H}^1$ and $\dot{H}^{-1}$ norms on a connected set $T\subseteq \R^s$. Given strictly positive probability density $f = d\nu_1$, we can define a Laplace-type linear operator $L:\dot{H}^1\to\dot{H}^{-1}$
\begin{align}
\label{eq:weighted_lap}
Lh = -\Delta h +\nabla(-\log f)\cdot \nabla h
= -\frac{1}{f}\nabla\cdot(f\nabla h)
\end{align}
which satisfies the fundamental integration by parts formula:
\[
\int_{T} (Lh_1)h_2\,\dee\nu_1 = \int_{T} h_1(Lh_2)\,\dee\nu_1 = \int_{T} \nabla h_1 \cdot \nabla h_2\,\dee\nu_1,
\]
provided $h_1,h_2$ have homogeneous Dirichlet or Neumann boundary conditions.
Therefore, we can define the weighted Sobolev norms $\|h\|_{L^2(f)}$, $\|h\|_{\dot{H}^1(f)}$ and $\|h\|_{\dot{H}^{-1}(f)}$ for any $h$ that satisfies $\int_T h\,\dee\nu_1 = 0$.

\[
\|h\|^2_{L^2(f)} = \int_{T} h^2\,\dee\nu_1 , \quad \|h\|^2_{\dot{H}^1(f)}  = \int_{T}  |\nabla h|^2\,\dee\nu_1, 
\]
\[ ~\label{eq:H-1}
\|h\|^2_{\dot{H}^{-1}(f)} \coloneqq \sup \bigg\{\int_{T} h\varphi\,\dee\nu_1 \  \bigg |\ \|\varphi\|_{\dot{H}^1(f)} \leq 1  \bigg\}^2  = \int_{T} h(L^{-1}h)\,\dee\nu_1.
 = \int_{T} h(L^{-1}h)f\,\dee x.\]

\Cref{thm:local} indicates that the linearization of $W_2$ is \textit{weighted} $\dot{H}^{-1}$, i.e., $W_2(f,g)\approx \|f-g \|_{\dot{H}^{-1}(f)} $ if $g$ is an infinitesimal perturbation of $f$. 
\begin{theorem}[Linearization of $W_2$~\cite{otto2000generalization,Villani}]\label{thm:local}
For any positive probability density functions $f$ and $g_{\varepsilon}=(1+\varepsilon h)f $ on $\mathbb{R}^d$ where $\int_T h d\nu_1= 0$, then
\[
\|h \|_{\dot{H}^{-1}(f)}^2 = \liminf_{\varepsilon\rightarrow0} \frac{W_2(f,g_{\varepsilon})^2}{\varepsilon^2}.
\]
\end{theorem}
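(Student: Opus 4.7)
The plan is to use the Benamou--Brenier dynamical formulation of the quadratic Wasserstein distance,
\[
W_2(f,g_\varepsilon)^2 = \inf_{(\rho,v)}\int_0^1\int_{\R^d}|v(t,x)|^2\,\rho(t,x)\,\dee x\,\dee t,
\]
where the infimum runs over smooth densities $\rho:[0,1]\times\R^d\to\R_+$ and vector fields $v$ satisfying the continuity equation $\partial_t\rho+\nabla\cdot(\rho v)=0$ with $\rho(0,\cdot)=f$ and $\rho(1,\cdot)=g_\varepsilon$. Throughout I would restrict $\varepsilon$ so that $1+\varepsilon h>0$, which holds for small $\varepsilon$ and ensures $g_\varepsilon$ is a probability density.

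For the upper bound, I would exhibit an explicit competitor built from the linear interpolant $\rho_t(x)=(1+t\varepsilon h(x))f(x)$, with velocity $v_t=\nabla\psi_t$ where $\psi_t$ solves the elliptic equation $\nabla\cdot(\rho_t\nabla\psi_t)=-\varepsilon h f$. Since $\rho_t=f+O(\varepsilon)$ uniformly, a perturbation argument gives $\psi_t=\varepsilon\varphi+O(\varepsilon^2)$ where $\varphi$ solves $-\nabla\cdot(f\nabla\varphi)=hf$, i.e.\ $\varphi=L^{-1}h$. Evaluating the action and using that $\|h\|_{\dot H^{-1}(f)}^2=\int_T |\nabla\varphi|^2 f\,\dee x$ (obtained by testing $L\varphi=h$ against $\varphi$) yields
\[
\limsup_{\varepsilon\to 0}\frac{W_2(f,g_\varepsilon)^2}{\varepsilon^2}\leq\|h\|_{\dot H^{-1}(f)}^2.
\]

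For the lower bound, I would use duality. For any admissible $(\rho,v)$ and any smooth mean-zero test function $\phi$, integrating the continuity equation in time gives
\[
\varepsilon\int_T\phi h f\,\dee x=\int\phi(g_\varepsilon-f)\,\dee x=\int_0^1\int\nabla\phi\cdot v\,\rho\,\dee x\,\dee t,
\]
whence Cauchy--Schwarz produces
\[
\varepsilon^2\left(\int_T\phi h f\,\dee x\right)^2\leq\left(\int_0^1\!\!\int|\nabla\phi|^2\rho\,\dee x\,\dee t\right)\left(\int_0^1\!\!\int|v|^2\rho\,\dee x\,\dee t\right).
\]
Taking the infimum over $(\rho,v)$ replaces the second factor by $W_2(f,g_\varepsilon)^2$; then maximizing over $\phi$ subject to $\|\phi\|_{\dot H^1(f)}\leq 1$ recovers $\|h\|_{\dot H^{-1}(f)}^2$ on the left, provided the first factor tends to $\|\nabla\phi\|_{L^2(f)}^2$ along near-optimal paths.

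The main obstacle is precisely this last point: justifying that along a near-minimizing sequence of paths $(\rho^\varepsilon,v^\varepsilon)$, the densities $\rho^\varepsilon(t,\cdot)$ are close enough to $f$ uniformly in $t$ that the Dirichlet-type factor $\int_0^1\!\int|\nabla\phi|^2\rho^\varepsilon\,\dee x\,\dee t$ converges to $\|\nabla\phi\|_{L^2(f)}^2$. This is handled by the standard observation that the Benamou--Brenier energy controls $W_2(\rho_s,\rho_t)\leq|t-s|^{1/2}(\text{action})^{1/2}$, so the bound $\text{action}(\rho^\varepsilon,v^\varepsilon)=O(\varepsilon^2)$ forces $\rho^\varepsilon(t,\cdot)\to f$ in $W_2$, uniformly in $t$, and hence in a weak sense sufficient to test against the bounded continuous function $|\nabla\phi|^2$ after a standard smoothing/regularity reduction. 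Secondary technicalities include ensuring solvability of the elliptic equation for $\psi_t$ in the upper bound, which follows from the strict positivity of $f$ and the mean-zero condition on $h$, and reducing from general admissible $(\rho,v)$ to smooth ones by mollification. Combining the two inequalities upgrades the $\liminf$ to a limit and yields the claim.
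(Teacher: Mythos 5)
This theorem is quoted in the paper from \cite{otto2000generalization,Villani} and no proof is given there, so there is nothing internal to compare against; your Benamou--Brenier plan is, however, essentially the argument used in those references (cf.\ Villani, \emph{Topics in Optimal Transportation}, Thm.~7.26), and it is sound. The upper bound via the interpolant $\rho_t=(1+t\varepsilon h)f$ with $v_t=\nabla\psi_t$ solving $\nabla\cdot(\rho_t\nabla\psi_t)=-\varepsilon hf$ is exactly right, and your identification of $\varphi=L^{-1}h$ with $\int|\nabla\varphi|^2 f=\int h(L^{-1}h)f$ matches the paper's definition of $\|\cdot\|_{\dot H^{-1}(f)}$. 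For the lower bound, the sentence ``taking the infimum over $(\rho,v)$ replaces the second factor by $W_2^2$'' is not literally available since the two factors in your Cauchy--Schwarz bound share the same $\rho$; but you immediately flag this coupling as the main obstacle and give the correct resolution (the $O(\varepsilon^2)$ action bound forces $\rho^\varepsilon(t,\cdot)\to f$ in $W_2$ uniformly in $t$ along near-minimizers, hence weak convergence sufficient to pass to the limit in the Dirichlet factor after reducing to smooth test functions), so I regard the plan as complete in outline.

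Two small remarks. First, your argument produces $\lim_{\varepsilon\to0}W_2(f,g_\varepsilon)^2/\varepsilon^2=\|h\|_{\dot H^{-1}(f)}^2$; the statement as printed in the paper omits the square on the left-hand side, and your version is the one consistent with the cited literature, so do not try to contort the proof to match the unsquared display. Second, the statement mixes $\R^d$ with integrals over $T$; on an unbounded domain the solvability of the elliptic problem for $\psi_t$ and the density of smooth compactly supported test functions in $\dot H^1(f)$ require decay or weight assumptions on $f$ beyond strict positivity, whereas on a bounded $T$ with Neumann conditions (the setting actually used elsewhere in the paper) your appeal to positivity of $f$ and the mean-zero condition on $h$ suffices. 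Neither point is a gap in the mathematical idea, but both should be pinned down if you write the proof out in full.
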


\begin{remark}
The $W_2$ distance and the $\dot{H}^{-1}(f)$ norm are quite different metrics if $f$ and $g$ are not close enough in the above linearization regime. For example, $\dot{H}^{-1}(f)$ does not have the global convexity that $W_2$ has with respect to signal translation and dilation~\cite{ERY2019}.
\end{remark}
    
If \cref{thm:local} states the asymptotic behavior of the $W_2$ distance (between two probability distributions that are close enough), the next theory gives a relatively global characterization between the  negative Sobolev norm $\dot{H}^{-1}(T)$ and the $W_2$ distance. We remark that here notation $\dot{H}^{-1}(T)$ represents the familiar $\dot{H}^{-1}$ seminorm for Lebesgue measure, i.e., $f=1$ in Equation~\eqref{eq:H-1}, instead of the weighted $\dot{H}^{-1}$ norm. The theorem provides a \textit{non-asymptotic} comparison in the sense that $f$ and $g$ are not necessary to be close enough in a linearization regime and it still holds for $f$ and $g$ in any dimension.

\begin{theorem}[The Equivalence between $\dot{H}^{-1}$ and $W_2$~\cite{peyre2018comparison}]\label{thm:bound}
Given any positive probability density functions $f$ and $g$ on $T$ that are bounded from below and from above by constants $a,b$ where $0<a<b<+\infty$, i.e., $a<f,g<b$, then
\[
\frac{1}{\sqrt{b}}\|f-g \|_{\dot{H}^{-1}(T)} \leq  W_2(f,g) \leq \frac{1}{\sqrt{a}} \|f-g \|_{\dot{H}^{-1}(T)}
\]
\end{theorem}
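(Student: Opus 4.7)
The plan is to use the Benamou--Brenier dynamical characterization of the Wasserstein distance,
\[
W_2^2(f,g) = \inf\left\{\int_0^1\!\!\int_T \frac{|J_t(x)|^2}{\rho_t(x)}\,\dee x\,\dee t : \partial_t\rho_t+\nabla\cdot J_t=0,\ \rho_0=f,\ \rho_1=g\right\},
\]
together with the analogous variational characterization of the unweighted $\dot H^{-1}(T)$ seminorm,
\[
\|f-g\|_{\dot H^{-1}(T)}^2 = \inf\left\{\int_T |J(x)|^2\,\dee x : \nabla\cdot J = f-g\right\},
\]
both imposed with homogeneous Neumann boundary data on $\partial T$ so that the compatibility condition $\int_T(f-g)\,\dee x=0$ makes the divergence equation solvable and mass is conserved along the curve. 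Both quantities are minimum kinetic-energy functionals subject to a divergence constraint; the only difference is the $\rho_t$-weighting in the first integrand. Each inequality will then follow by inserting a suitable competitor into the corresponding infimum.

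For the upper bound, I would take $J$ to be a minimizer of the $\dot H^{-1}(T)$ problem, so that $\nabla\cdot J = f-g$ and $\int_T|J|^2\,\dee x = \|f-g\|_{\dot H^{-1}(T)}^2$, and pair it with the linear interpolation $\rho_t := (1-t)f + tg$ and the time-independent momentum $J_t := J$. Since $\partial_t\rho_t = g-f = -\nabla\cdot J$, this pair is Benamou--Brenier-admissible, and $a<f,g<b$ gives $a<\rho_t<b$ pointwise, so
\[
W_2^2(f,g) \leq \int_0^1\!\!\int_T \frac{|J|^2}{\rho_t}\,\dee x\,\dee t \leq \frac{1}{a}\int_T|J|^2\,\dee x = \frac{1}{a}\|f-g\|_{\dot H^{-1}(T)}^2.
\]

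For the lower bound, I would start from a (displacement-interpolation) minimizing pair $(\rho_t^*,J_t^*)$ for $W_2(f,g)$ and define the time-averaged momentum $\bar J(x) := \int_0^1 J_t^*(x)\,\dee t$. Integrating the continuity equation in $t$ gives $\nabla\cdot\bar J = f-g$, so $\bar J$ is admissible in the $\dot H^{-1}(T)$ infimum. A pointwise Cauchy--Schwarz in $t$ then yields
\[
|\bar J(x)|^2 \leq \left(\int_0^1 \rho_t^*(x)\,\dee t\right)\int_0^1 \frac{|J_t^*(x)|^2}{\rho_t^*(x)}\,\dee t,
\]
and integrating over $T$ while using $\rho_t^*\leq b$ delivers $\|f-g\|_{\dot H^{-1}(T)}^2 \leq \int_T|\bar J|^2\,\dee x \leq b\,W_2^2(f,g)$.

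The principal technical obstacle is precisely the bound $\rho_t^*\leq b$ used at the last step: one must know that the density along the $W_2$ geodesic never exceeds the upper bound on the endpoints. This $L^\infty$ preservation along displacement interpolation is classical on convex domains (an instance of McCann's displacement convexity for the functionals $\rho\mapsto\int\rho^p\,\dee x$ as $p\to\infty$), and is where the hypothesis $f,g<b$ is genuinely used; given it, the time integral $\int_0^1 \rho_t^*(x)\,\dee t$ is bounded by $b$ pointwise in $x$. Beyond this, the remaining work is routine: one checks existence and enough regularity of the minimizers $J$ and $(\rho_t^*,J_t^*)$ to justify the divergence and Fubini manipulations above, which is standard once $f,g$ are bounded away from $0$ and $\infty$ as hypothesized.
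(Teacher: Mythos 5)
The paper does not prove this statement itself---it is quoted from the cited reference \cite{peyre2018comparison}---and your argument is essentially the standard proof given there (and in Santambrogio's book): Benamou--Brenier with the linear interpolation and the harmonic momentum field for the upper bound, and time-averaging of the optimal momentum plus Cauchy--Schwarz for the lower bound. Both halves are correct as written, including your identification of the one genuinely nontrivial ingredient, namely the $L^\infty$ bound $\rho_t^*\leq b$ along displacement interpolation. The only caveat worth recording is that this step, and indeed the identification of the Benamou--Brenier infimum over paths confined to $T$ with the Euclidean $W_2$ of the paper's definition, requires $T$ to be convex (the paper only assumes $T$ connected); in the application here $T$ is a compact time interval, so this is harmless, but it should be stated as a hypothesis if the theorem is meant in the generality written.
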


The two theorems above shed light on the local behavior of $W_2$ by relating it with the well-known Sobolev norms. From \cref{thm:bound}, we know that if $W_2(f,g)$ is small, then the corresponding negative Sobolev space seminorm  $\dot{H}^{-1}(T)$ is also small. While on the other hand, \cref{thm:local} conveys the message that once $W_2$ drives $f$ close enough to $g$, the distance becomes exactly the \textit{weighted} $\dot{H}^{-1}$ norm.

\subsection{Nonlinear Seismic Inversion in the Deterministic Setting}
\label{ssec:fwd}
For a forward problem, the relation between the input model parameter $u$, and the forward operator $\G$, and the observable output data $y$ can be expressed as
\bq \label{eq:fwd_simple}
y = \G(u).
\eq
The explicit form of $\G$ can be found in~\cref{sec:fwi}.
The inverse problem of reconstructing  $u$ from $y$ typically does not fulfill Hadamard's postulates of well-posedness: there might not be a solution in the strict sense, solutions might not be unique, or a solution might not depend continuously on the data. 
Even if $\G$ is linear, there is no guarantee that it is invertible. Even if $\G$ is invertible, the computational cost of inverting a dense matrix is prohibitive. 
An alternative way of formulating the inverse problem is to estimate the true model parameter $u^*$ through the solution of an optimization problem
\bq \label{eq:simple_misfit}
u^* = \argmin \limits_u J(\G(u), y) + R(u),
\eq
where $J$ is a suitable choice of objective/loss/misfit function characterizing the difference between the data $\G(u)$  generated by the current (and inaccurate) model parameter $u$ and the true observable data $y$. $R(u)$ here denotes the regularization term to enforce desirable properties on the solution. For example, the Tikhonov regularization $R(u) = ||\Gamma u||^2_2$ with a chosen matrix $\Gamma$ is a common choice to obtain a smooth reconstruction of $u$.

The deterministic approach~\eqref{eq:simple_misfit}  is called full waveform inversion (FWI) in exploration geophysics, which is a PDE-constrained optimization by wave equations. The forward operator $\G$ is highly nonlinear for FWI. The conventional objective function $J$ is the least-squares norm ($L^2$)~\cite{virieux2009overview}. 
The frequency content of the data~\cite{ Mora1989} constrains the accuracy of the model parameter in inversion, and high-frequency data is advantageous to achieve higher resolution in the model recovery. However, the oscillatory and periodic nature of waveforms leads to a primary challenge and results in finding only local minima when using the $L^2$ norm. The inversion result is also extremely sensitive to high-frequency noise in the data. Numerous works have been done on the subject to deal with these issues. One particular idea is to replace the $L^2$ norm with other objective functions in optimization for a wider basin of attraction and better stability. Since its first proposal~\cite{EFWass}, the $W_2$ distance~\eqref{eq:static} from optimal transport theory has been extensively studied for topics including the convexity, noise robustness, signal normalization and fast algorithms for computation in the geophysics community\cite{engquist2016optimal,W1_2D,yang2019analysis}. As a new class of objective function for FWI, optimal transport-based metrics are shown to be effective in dealing with local minima issues and are already used in the industry for realistic inversion.

\section{The Bayesian Approach} \label{sec:Bayesian}
In this section, we outline the Bayesian approach to inversion. One of the motivations for using a Bayesian approach is that there is typically uncertainty in the data, for example, from random observational noise, or use of a smoothing/low rank forward model. It then makes sense that uncertainty in the data should be propagated to uncertainty in the solution to the inverse problem. The Bayesian approach combines a probabilistic model for the observed data $y$, $\P(\dee y|u)$, with a probability distribution $\P(\dee u)$ representing our prior belief about the unknown $u$. Bayes' theorem then tells us how to construct the posterior distribution $\P(\dee u|y)$ of the unknown given the data: formally, if $\P(\dee y|u) = \P(y|u)\,\dee y$ admits a Lebesgue density,
\begin{align}
\label{eq:bayes_formal}
\P(\dee u|y) = \frac{\P(y|u)\P(\dee u)}{\P(y)}.
\end{align}
The \textit{probability measure} $\P(\dee u|y)$ is then the solution to the Bayesian inverse problem, rather than a single state as in many classical inversion approaches. This measure can be used to, for example, obtain credible bounds on the solution, or calculate the uncertainty associated with quantities of interest.

Though abstractly the Bayesian approach is quite different from classical approaches, connections between the methods exist. For example, if the prior distribution $\P(\dee u) = N(0,C)$ is chosen to be Gaussian, then modes of the posterior distribution will coincide with minimizers of the classical variational problem
\begin{equation} ~\label{eq:bayes_misfit}
u^* = \argmin_u \Phi(u;y) + \frac{1}{2}\la u,C^{-1} u \ra.
\end{equation}
Here, $\Phi(u,y) = -\log\P(y|u)$ (the negative log-likelihood) represents the misfit between the observed data and the state $u$. The operator $C$ will often be chosen as the inverse of a differential operator, so that the regularization term $\la u,C^{-1} u \ra$ is a Sobolev-type norm, penalizing (lack of) smoothness of the solution. Equivalently, the Tikhonov regularization in~\eqref{eq:simple_misfit} embeds the a priori distribution of $u$. In particular, if the prior distribution $\P(\dee u) \propto \exp(-R(u))$ and the likelihood $\P(y|u) \propto \exp(-J(\G(u),y))$, we have $\Phi(u,y) = J(\G(u),y)$ and the two minimization problems in~\eqref{eq:simple_misfit} and~\eqref{eq:bayes_misfit} coincide.

Alternatively, rather than starting with a probabilistic data model $\P(\dee y|u)$, we can begin with a loss function $\Phi(u;y)$, and define the `likelihood' through the relation $\P(\dee y|u) = \exp(-\Phi(u;y))\,\Q_0(\dee y)$ for some measure $\Q_0$. Note however that we then do not necessarily have that 
\begin{align}
\label{eq:potential_norm}
\P(Y|u) = \int_Y \exp(-\Phi(u;y))\,\Q_0(\dee y) = 1\quad\text{for all}\quad u \in X
\end{align}
and so we do not necessarily have an explicit probabilistic data model. When this is the case, the model is not strictly Bayesian; we will refer to the function $\Phi(u;y)$ as the \textit{potential} and the probability measure $\P(\dee u|y)$ \textit{defined} by \cref{eq:bayes_formal} as the \textit{Gibbs posterior} distribution \cite{alquier2016properties}. This is mainly a philosophical distinction, and the mathematical analysis of the family of measures $\mathbb{P}(\dee u|y)$ will not be affected by whether we work with posterior or Gibbs posterior distributions.

In the next subsections, we will outline choices of prior distribution appropriate for seismic inversion. We will then consider different data models and the corresponding likelihoods. Finally, we will give the details for rigorously defining the (Gibbs) posterior distribution \cref{eq:bayes_formal}, and study its stability with respect to perturbations to the observed data for the different choices of likelihood.

\subsection{The Prior Distribution}
\label{ssec:prior}
In high- and infinite-dimensional Bayesian inverse problems, where the aim is to recover a field, the prior distribution is often chosen to impose properties such as regularity and length-scale on samples. Gaussian priors are often used when continuity of the field is required, and non-Gaussian priors such as Besov and level set priors may be used when this is not desired. In this section, we will outline the details for some choices of prior that will be suitable for seismic inversion.

\subsubsection{Gaussian Priors}
\label{sssec:gaussian_prior}
Gaussian distributions are one of the most studied and utilized classes of distributions on function spaces; we outline their definition, examples and some key properties. Let $D\subseteq \R^d$ denote a spatial domain, and $(\Omega,\mathcal{F},\P)$ a probability space. A random field $u:D\times\Omega\to\R$ is said to be a Gaussian random field if, for any finite collection of points $\{x_j\}_{j=1}^n\subseteq D$, the random vector $(u(x_1,\cdot),\ldots,u(x_n,\cdot))$ is a multivariate Gaussian random variable on $\R^n$. We will often drop the dependence of $u$ on its random argument for cleaner notation.

A useful property of Gaussian random fields is that they are defined completely by their mean function $m:D\to\R$ and covariance function $c:D\times D\to\R$:
\[
m(x) = \mathbb{E}(u(x)),\quad c(x,x') = \mathbb{E}(u(x)-m(x))(u(x')-m(x')),\quad x,x' \in D.
\]
We will write $u \sim \mathsf{GP}(m(x),c(x,x'))$ when this is the case. When $c(x,x')$ depends only on the Euclidean distance $|x-x'|$, the Gaussian field is said to be isotropic. A common family of isotropic covariance functions used in practice are the Mat\'ern covariance functions, defined by
\[
c(x,x') = \sigma^2\frac{2^{1-\nu}}{\Gamma(\nu)}\left(\frac{|x-x'|}{\ell}\right) K_\nu \left(\frac{|x-x'|}{\ell}\right)
\]
for scalar parameters $\sigma,\nu,\ell$ representing amplitude, regularity and length-scale respectively; here $K_\nu$ denotes the modified Bessel function of the second kind. Assuming a smooth mean function $m$, on regular enough domains samples from a Gaussian with the covariance function will almost surely posess up to $\nu$ Sobolev and H\"older derivatives.

It is often convenient to work with the covariance operator $C:L^2(D)\to L^2(D)$, defined by $C = \E (u-m)\otimes (u-m)$ rather than the covariance function, in which case we will write $u \sim N(m,C)$. They are related by
\[
(C\varphi)(x) = \int_D c(x,x')\varphi(x')\,\dee x',\quad \varphi \in L^2(D).
\]
Hence when the covariance function is a Green's function, the covariance operator is the inverse of the corresponding differential operator. When $D = \R^d$, the covariance operator corresponding to the Mat\'ern covariance function above is given by
\[
C = \sigma^2 \frac{\Gamma(\nu+d/2)(4\pi)^{d/2}\ell^d}{\Gamma(\nu)}(I - \ell^2\Delta)^{-\nu-d/2}
\]
where $\Delta$ denotes the Laplace operator on $\R^d$. We can therefore generate samples $u \sim N(0,C)$ by solving the (fractional) SPDE
\[
(I - \ell^2\Delta)^{\nu/2+d/4}u = \sigma\sqrt{\frac{\Gamma(\nu+d/2)(4\pi)^{d/2}\ell^d}{\Gamma(\nu)}} W
\]
where $W$ is Gaussian white noise: $W \sim \mathsf{GP}(0,\delta(x-x'))$, or equivalently $W \sim N(0,I)$.

In \cref{fig:prior_gauss}, the left block depicts examples of samples of Gaussian fields with Mat\'ern covariance functions for fixed $\sigma, \nu$, and $\ell$ decreasing from left-to-right, top-to-bottom.

\subsubsection{Level-Set Type Priors}
\label{sssec:level}
Gaussian random fields, such as those with Mat\'ern covariance as described above, typically have global smoothness properties associated with their samples. In some situations, such as classification problems or inference of salt models discussed later, piecewise constant or piecewise continuous samples may be desired instead. One method to produce such fields is to write them as nonlinear transformations of Gaussian fields. For example, given a Gaussian measure $\nu_0 = N(m,C)$ and scalar values $u_+,u_- \in \R$, one could define a prior measure by the pushforward
\begin{align}
\label{eq:level_set_prior0}
\pi_0 = F^\sharp \nu_0,\quad F(v)(x) = u_+\mathds{1}_{v(x)>0} + u_-\mathds{1}_{v(x)\leq 0}.
\end{align}
That is, $\pi_0$ is the law of the thresholded Gaussian field $F(v), v \sim \nu_0$: samples from $\pi_0$ take the values $u_+,u_-$ almost everywhere, with interface between the values given by the level set $\{v(x) = 0\}$. Such priors have been studied previously from a nonparametric Bayesian perspective \cite{iglesias2016bayesian,DIS16}

Alternatively, one may desire some combination of the above and plain Gaussian priors. For example, given a product Gaussian measure $\nu_0 = N(m_1,C_1)\times N(m_2,C_2)$, one could define a mixed level set prior:
\begin{align}
\label{eq:level_set_prior}
\pi_0 = F^\sharp \nu_0,\quad F(v,w)(x) = u_+\mathds{1}_{v(x)>0} + w(x)\mathds{1}_{v(x)\leq 0}.
\end{align}
Examples of samples from the above two priors are shown in the middle and right blocks of \cref{fig:prior_gauss} respectively, with the length-scale of all underlying Gaussian fields decreasing from left-to-right, top-to-bottom. These examples may be generalized further to multiple interfaces, for example by using the vector level set method \cite{bertozzi2018uncertainty}. Additionally, they may be generalized to allow for uncertainty in the values $u_+$, $u_-$ using a hierarchical method \cite{dunbar2020reconciling}; such priors will be considered numerically in \cref{sec:numerical}.

\begin{remark}
It may be convenient, both theoretically and numerically, to view the prior as $\nu_0$ and compose the potential $\Phi$ with the map $F$. This is sometimes referred to as non-centering, and we will use this when establishing existence and well-posedness of the posterior distributions when using the level-set type priors. It can checked, using the definition of the pushforward measure, that the stability estimates obtained are invariant under this change of parameterization.
\end{remark}

\begin{figure}
\includegraphics[width=\textwidth,trim=5cm 0cm 3.5cm 0cm,clip]{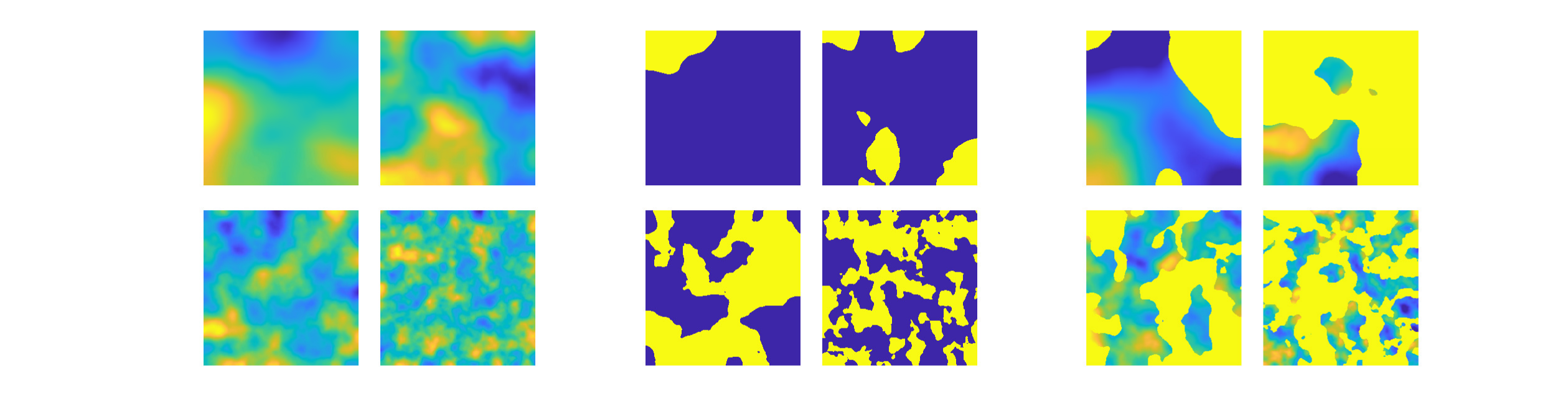}
\caption{Example of independent samples from the priors discussed in \cref{ssec:prior}. (Left) plain Gaussian prior, (middle) plain level set prior and (right) mixed level set prior. In all cases the underlying Gaussian random fields have Mat\'ern covariance with the regularity and amplitude parameters fixed, and the length-scale is decreased from left-to-right, top-to-bottom within each block.}
\label{fig:prior_gauss}
\end{figure}

\subsection{Likelihood and Loss functions}
In this subsection, we outline several likelihood functions and loss functions that are the main focus of this paper. We first consider some natural likelihood functions corresponding to explicit data models, which lead to Bayesian posterior distributions. We then consider two loss functions used in variational inversion approaches, which lead to Gibbs posteriors. In particular, we discuss the noise models that correspond to these posteriors heuristically. In what follows, we assume that the data is a function defined on a spatial domain $D\subseteq\R^d$ equipped with a finite measure $\lambda$, and temporal domain $T\subseteq\R^s$ equipped with the Lebesgue measure. Note that this setup allows us to consider model setups with either discrete or continuous spatial measurements simultaneously --  for example $D$ may be a discrete set equipped with the counting measure, or it may be a continuous set equipped with the Lebesgue or Hausdorff measure.

\subsubsection{Explicit Likelihood Functions}
\label{sssec:explicit}
We first consider the simplest case wherein the loss $\Phi(u;y) = J(\G(u),y)$ is given by the $L^2$ misfit:
\[
\Phi_{L^2}(u;y) = \frac{1}{2}\int_D \|\G(u)(x,\cdot)-y(x,\cdot)\|^2_{L^2(T)}\,\lambda(\dee x).
\]
This loss arises as a negative log-likelihood by assuming that the data is corrupted by additive Gaussian space-time white noise: 
\[
y = \G(u) + \eta,\quad \eta \sim N(0,I).
\]
Note that under this model, if $\dim(Y) = \infty$, the data $y$ will almost-surely not be valued in $L^2$ due to the roughness of the noise: we will have $y(x,\cdot) \in H^{-r}(T)$ almost surely for all $r > s/2$ but not $r = s/2$. To see how this leads to the above loss, observe that $y|u \sim \Q_u:= N(\G(u),I)$. Defining $\Q_0 = N(0,I)$, we have by the Cameron-Martin theorem that
\begin{align*}
\frac{\dee\Q_u}{\dee\Q_0}(y) &= \exp\left(-\frac{1}{2}\|\G(u)\|_{L^2(D;L^2(T))}^2 + \la y,\G(u)\ra_{L^2(D;L^2(T))}\right)\\
&= \exp\left(-\frac{1}{2}\|\G(u)-y\|_{L^2(D;L^2(T))}^2 + \frac{1}{2}\|y\|_{L^2(D;L^2(T))}^2\right)
\end{align*}
since the Cameron-Martin space of white noise is $L^2$. For fixed $y$, if $\dim(Y) < \infty$ or $y \in L^2$ we may drop the second term in the exponent when defining the loss -- its appearance here ensures that the exponent is finite almost-surely under the noise model\footnote{In practice for the FWI problem it will be the case that $\dim(Y) < \infty$, since only a finite number of frequencies will be observed at a finite number of receivers.}.

Another loss function that has been considered is the $\dot H^{-1}$ loss\footnote{Note that $\|y(x,\cdot)\|_{\dot H^{-1}(T)}$ is well-defined only if $\int_T y(x,t)\,\dee t = 0$. After data preprocessing, zero-frequency in time signals are removed from FWI data $y(x,\cdot)$. The mean-zero property makes $\dot H^{-1}$ a proper data discrepancy for comparing seismic data.}
\[
\Phi_{\dot H^{-1}}(u;y) = \frac{1}{2}\int_D \|\G(u)(x,\cdot)-y(x,\cdot)\|^2_{\dot H^{-1}(T)}\,\lambda(\dee x).
\]
This arises from a similar data model above, except instead of assuming that the noise is white, temporal correlations are assumed:
\[
y = \G(u) + \eta,\quad \eta\sim N(0,\Gamma)
\]
where $\Gamma = -\Delta_T$ is the negative Laplacian on the temporal variable. Similarly to the above we have $y|u \sim \Q_u:= N(\G(u),\Gamma)$, and so defining $\Q_0 = N(0,\Gamma)$ we have that
\[
\frac{\dee\Q_u}{\dee\Q_0}(y) = \exp\left(-\frac{1}{2}\|\G(u)-y\|_{\Gamma}^2 + \frac{1}{2}\|y\|_{\Gamma}^2\right)
\]
where $\la\cdot,\cdot\ra_\Gamma := \la\cdot,\Gamma^{-1}\cdot\ra_{L^2(D;L^2(T))} = \la\cdot,\cdot\ra_{L^2(D;\dot H^{-1}(T))}$ denotes the Cameron-Martin inner product assocated with $\Q_0$. Note that in this case the noise will be even rougher in time than the white noise case: since the eigenvalues $\{\lambda_j\}$ of the Laplacian on $T$ asymptotically satisfy $\lambda_j \asymp j^{2/s}$ by a Weyl-type law, we have that if $\eta \sim N(0,\Gamma)$,
\[
\E\|\eta(x,\cdot)\|_{H^{-r}(T)}^2 \asymp \sum_{j=1}^\infty j^{-2r/s+2/s} < \infty\quad\text{iff }r> 1 + \frac{s}{2},
\]
using the Karhunen-Lo\`eve expansion of $\eta(x,\cdot)$. As in the white noise case, we may drop the second term if $\dim(Y)<\infty$ or if $y \in L^2(D;\dot H^{-1}(T))$.

\subsubsection{Loss Functions and Approximate Noise Models}
\label{sssec:implicit_noise}
The loss functions above correspond to negative log-likelihoods from explicit data models, which will lead to Bayesian posteriors if coupled with prior distributions. However, this is not the case for general loss functions, as discussed at the beginning of~\cref{sec:Bayesian}. 
Given a general loss function $\Phi(u;y)$ and a measure $\Q_0$ on $Y$ such that $\exp(-\Phi(u;y))$ is integrable with respect to $\Q_0$, one can define the normalized loss function $\tilde\Phi(u;y)$ by
\[
\tilde\Phi(u;y) = \Phi(u;y) - \log\int_Y \exp\left(-\Phi(u;z)\right)\,\Q_0(\dee z)
\]
so that the relation \cref{eq:potential_norm} holds for $\tilde\Phi(u;y)$. The corresponding data-generating distribution would then be given by $\P(\dee y|u) = \exp(-\tilde\Phi(u;y))\,\Q_0(\dee y)$. However, performing this normalization may be intractable if it cannot be done analytically. Therefore, it may be preferable to work with the unnormalized loss function. In this subsection, we consider an unnormalized multiplicative noise loss function and a Wasserstein loss function. We will show that the latter could be viewed asymptotically as an unnormalized state-dependent multiplicative noise loss in the small noise limit.

In the above additive noise models, the loss function scale was dictated by the size of the noise on the observations. In the absence of an explicit data model, one needs to be careful about choosing the scale for the potential. Typically, one introduces a scalar parameter $\beta > 0$, often referred to as the inverse temperature, and works with $\beta\Phi(u;y)$ in place of $\Phi(u;y)$. The choice of this parameter is known as calibration and may either be chosen empirically \cite{syring2018calibrating} or treated as a hyperparameter as part of the inverse problem \cite{zhu2016bayesian}. We do not discuss the choice of $\beta$ here and assume it to be fixed.

We first introduce the Wasserstein loss function. The relevant background is presented in~\cref{ssec:W2}. To evaluate the quadratic Wasserstein distance ($W_2$) between the reference data and the output from the forward map, we must first transform the datasets into probability densities with respect to the temporal variable. Hence, given a scalar function $\sigma:\R\to\R^+$, we define the normalization operator $P_\sigma$ on functions $y:D\times T\to\R$ by
\[
(P_\sigma y)(x,t) = \frac{1}{Z_\sigma(x)}\sigma(y(x,t)),\quad Z_\sigma(x) = \int_T\sigma(y(x,t'))\,\dee t'.
\]
Given this operator, we then define the Wasserstein loss by
\[
\Phi_{W_2}(u;y) = \frac{1}{2}\int_D W_2\left((P_\sigma\G(u))(x,\cdot),(P_\sigma y)(x,\cdot)\right)^2\,\lambda(\dee x).
\]
Note that $\Phi_{W_2}$ is not normalized in the sense described above and does not appear to correspond to a particular data model even if it were normalized. However, via linearization of the $W_2$ distance, we can describe an approximate data model that this loss corresponds to in the limit of small observational noise. Assume that there is $\eta:D\times T\to\R$ such that 
\[
P_\sigma y = (1+\eta)P_\sigma\G(u),\quad \int_T \eta(x,t)(P_\sigma\G(u))(x,t)\,\dee t = 0\quad\text{for all}\quad x \in D.
\]
Then by~\cref{thm:local}, we may approximate for small $\|\eta(x,\cdot)\|_{\dot H^{-1}(P_\sigma\G(u))}$
\begin{align*}
\Phi_{W_2}(u;y) &\coloneqq \frac{1}{2}\int_D W_2\left((P_\sigma\G(u))(x,\cdot),(P_\sigma y)(x,\cdot)\right)^2\,\lambda(\dee x) \\& = \frac{1}{2}\int_D W_2\bigg((P_\sigma\G(u))(x,\cdot), \big(1+ \eta(x,\cdot)\big)  (P_\sigma \G(u))(x,\cdot )\bigg)^2\,\lambda(\dee x)\\
&\approx \frac{1}{2}\int_D \|\eta(x,\cdot)\|_{\dot H^{-1}(P_\sigma\G(u))}^2\,\lambda(\dee x) \quad \text{(by \cref{thm:local} with $\eps h = \eta$)}\\
&= \frac{1}{2}\int_D\left\|\frac{ (P_\sigma y)(x,\cdot) - P_\sigma\G(u))(x,\cdot) }{(P_\sigma\G(u))(x,\cdot) }\right\|_{\dot H^{-1}(P_\sigma\G(u))}^2\,\lambda(\dee x).
\end{align*} 
The above potential formally corresponds to the negative logarithm of an unnormalized Gaussian density $N(1,\cL(u))$, for some operator $\cL(u)$ defined below, evaluated at the ratio $P_\sigma y/P_\sigma\G(u)$.
It suggests that the Wasserstein loss could formally be considered as asymptotically coming from the state-dependent multiplicative noise data model
\[
P_\sigma y =(1+ \eta)\, P_\sigma\G(u),\quad  \eta|u \sim N(0,\cL(u))
\]
where the operator $\cL(u):\mathrm{Dom}(\cL(u))\to L^2(D;L^2(T))$ is defined by
\begin{align*}
\cL(u)\varphi &= -\frac{1}{P_\sigma \G(u)}\nabla_T\cdot\left(P_\sigma \G(u)\nabla_T \left( \frac{\varphi}{P_\sigma \G(u)} \right)\right),\\
\quad \mathrm{Dom}(\cL(u)) &= \left\{\varphi \in L^2(D;H^2(T))\,\bigg|\,\int_T \varphi(x,t)(P_\sigma\G(u))(x,t)\,\dee t = 0\text{ for all }x \in D\right\}
\end{align*}
and $\nabla_T$ denotes the gradient with respect to the temporal variable; this operator is derived from the definition of the weighted $\dot{H}^{-1}$ norm and the operator \cref{eq:weighted_lap}. Note that subject to appropriate boundedness and regularity of $P_\sigma\G(u)$, samples $\eta \sim N(1,\cL(u))$ will have the same negative Sobolev regularity as samples $\eta \sim N(0,\Gamma)$ considered in \cref{sssec:explicit}.

The relation of Wasserstein loss to a multiplicative noise model is interesting. The latter has been studied in the literature of Bayesian inverse problems~\cite{isaac2015scalable} before the introduction of the Wasserstein loss for FWI~\cite{engquist2016optimal}. One may consider the following unnormalized multiplicative noise model~\cite{iglesias2018bayesian}: 
\[
\Phi_M(u;y) = \frac{1}{2}\int_D \left\|\frac{(P_\sigma\G(u))(x,\cdot)-(P_\sigma y)(x,\cdot)}{(P_\sigma y)(x,\cdot)}\right\|_{L^2(T)}^2\,\lambda(\dee x),
\]
which can be viewed as arising from the model
\[
P_\sigma y = \eta\cdot P_\sigma\G(u),\quad 1/\eta \sim N(1,I).
\]
Alternatively, one may formulate it by informally assuming the noise variance is proportional to the size of the observed data. In general, the data and the output of the forward map do not need to be probability densities in this model. However, for the stability of the resulting posterior, the forward map must be bounded away from zero; see \cite{dunlop2019multiplicative} for a discussion. The condition can be ensured by using the same operator $P_\sigma$, and we do so in the following section for brevity. Additionally, one may prefer to use a model wherein $\eta  \sim N(1, I)$ rather than its reciprocal, in which case $P_\sigma\G(u)$ will replace $P_\sigma y$ in the denominator; we do not provide details of the proofs for this modification, but note that they are similar and slightly simpler than without this modification.

\subsection{The (Gibbs) Posterior Distribution}

In this section, we establish both that the (Gibbs) posterior measures corresponding to the likelihoods/potentials discussed above are well-defined (existence), and that these measures are stable with respect to perturbations of the observed data (well-posedness). The existence is established using the theory/assumptions of \cite{sullivan2017well}, which is a generalization of the result in \cite{dashti2016bayesian} that provides an abstract statement of Bayes' theorem at the level of measures. Given a prior measure $\pi_0$ on a space $X$ and a potential $\Phi$ of the form considered in the previous subsection, we show that the measure $\pi_\Phi^y$ defined by
\[
\pi_\Phi^y(\dee u) = \frac{1}{Z_\Phi(y)}\exp(-\Phi(u;y))\,\pi_0(\dee u),\quad Z_\Phi(y) = \int_X \exp(-\Phi(u;y))\,\pi_0(\dee u),
\]
defines a Radon probability measure on $X$. The definition of $\pi_\Phi^y$ is essentially a restatement of Bayes' theorem as given in \cref{eq:bayes_formal}.

Stability of the posterior will be established with respect to the \textit{Hellinger} distance $d_\mathrm{H}$ on probability measures:
\[
d_\mathrm{H}(\pi,\pi')^2 = \int_X \left(\sqrt{\frac{\dee \pi}{\dee \nu_0}(u)} - \sqrt{\frac{\dee \pi'}{\dee \nu_0}(u)}\right)^2\,\nu_0(\dee u),
\]
where the measure $\nu_0$ is such that both $\pi,\pi'$ are absolutely continuous with respect to $\nu_0$\footnote{This definition can be seen to be independent of the choice of $\nu_0$. Such a measure $\nu_0$ may always be found in practice, for example, $\nu_0 = \frac{1}{2}(\pi + \pi')$, however in our setup we may always take it to be the prior measure $\nu_0 = \pi_0$.}. That is, we show if a perturbation is made to the observed data, then the Hellinger distance between the resulting posteriors is bounded above by some norm of the perturbation. We, in particular, show that for the Wasserstein and $\dot H^{-1}$ choices of likelihood, if the observed data is perturbed, then the Hellinger distance between the corresponding posteriors is bounded above by the $\dot H^{-1}$ norm of this perturbation. This is in contrast to the choice of $L^2$ likelihood, in which it is only bounded by the $L^2$ norm of the perturbation. This essentially shows that the Wasserstein and $\dot H^{-1}$ likelihoods lead to posterior distributions, which are \textit{more robust} with respect to high-frequency noise on the data.

A useful property of the Hellinger distance is that it allows us to bound expectations of quantities of interest:
\[
\|\E^\pi(f) - \E^{\pi'}(f)\|_S \leq C(f) d_{\mathrm{H}}(\pi,\pi')
\]
for any $f \in L^2(X,\pi;S)\cap L^2(X,\pi';S)$. For example, the choice $f(u) = u$ provides us with stability of the posterior means with respect to the data.

\section{Existence and Well-posedness of the (Gibbs) Posterior}
\label{sec:proofs}
In this section, we provide assumptions that lead to the existence and well-posedness of the (Gibbs) posteriors arising from the likelihoods and priors in the previous section. We then show the applicability of the theory to the full-waveform inversion problem under a large class of prior distributions.

\subsection{Notation, Assumptions and Supporting Lemmas}
In what follows $(X,\|\cdot\|_X)$ will denote a separable Banach space. $(D,\mathcal{D},\lambda)$ will denote a finite measure space representing the spatial domain, and $T\subset\R^p$ will denote a compact subset of Euclidean space, representing the temporal domain. Any $L^p$ space over $D$ will be with respect to $\lambda$, and any $L^p$ space over $T$ will be with respect to the Lebesgue measure. We define the following subsets of $L^2(D;L^2(T))$:
\begin{align*}
Y_0 &= \left\{y \in L^\infty(D;L^\infty(T))\,\bigg|\,\int_T y(x,t)\,\dee t = 0\text{ for all }x \in D\right\},\\
Y_{1,\eps} &= \left\{\rho \in L^\infty(D;L^\infty(T))\,\bigg|\,\int_T \rho(x,t)\,\dee t = 1\text{ for all }x \in D,\; \eps \leq \rho \leq \eps^{-1}\right\},\quad \eps > 0.
\end{align*}
Thus, $Y_0$ is a set of bounded functions with no zero-frequency component, and $Y_{1,\eps}$ is a set of probability densities bounded above and below by positive constants. On any set $Y_{1,\eps}$, we have the following equivalence of metrics, which follows directly from  \cref{thm:bound}:

\begin{lemma}
\label{lem:w2h1_equiv}
Let $\eps > 0$, then for any $\rho,\rho' \in Y_{1,\eps}$,
\[
\eps\|\rho-\rho'\|_{L^2(D;\dot H^{-1}(T))}^2 \leq \int_D W_2(\rho(x,\cdot),\rho'(x,\cdot))^2\,\lambda(\dee x) \leq \eps^{-1}\|\rho-\rho'\|_{L^2(D;\dot H^{-1}(T))}^2.
\]
\end{lemma}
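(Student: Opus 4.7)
The plan is to reduce the claim to a pointwise (in $x \in D$) application of \cref{thm:bound} and then integrate. For each fixed $x \in D$, the functions $\rho(x,\cdot)$ and $\rho'(x,\cdot)$ are, by definition of $Y_{1,\eps}$, probability densities on $T$ satisfying $\eps \leq \rho(x,\cdot), \rho'(x,\cdot) \leq \eps^{-1}$. Applying \cref{thm:bound} with $a = \eps$ and $b = \eps^{-1}$ gives, for every $x \in D$,
\[
\eps^{1/2}\,\|\rho(x,\cdot)-\rho'(x,\cdot)\|_{\dot H^{-1}(T)} \;\leq\; W_2\bigl(\rho(x,\cdot),\rho'(x,\cdot)\bigr) \;\leq\; \eps^{-1/2}\,\|\rho(x,\cdot)-\rho'(x,\cdot)\|_{\dot H^{-1}(T)}.
\]
Squaring these inequalities (all quantities are nonnegative) and integrating against $\lambda$ over $D$ yields the two inequalities of the lemma directly, once one recalls that
\[
\|\rho-\rho'\|_{L^2(D;\dot H^{-1}(T))}^2 \;=\; \int_D \|\rho(x,\cdot)-\rho'(x,\cdot)\|_{\dot H^{-1}(T)}^2\,\lambda(\dee x).
\]

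The only subtlety worth checking is measurability of $x \mapsto W_2(\rho(x,\cdot),\rho'(x,\cdot))$ and $x \mapsto \|\rho(x,\cdot)-\rho'(x,\cdot)\|_{\dot H^{-1}(T)}$ so the integrals are well-defined; this follows from the $L^\infty$-boundedness assumed in $Y_{1,\eps}$ together with standard continuity properties of $W_2$ and of the $\dot H^{-1}(T)$ norm with respect to $L^2(T)$ perturbations of their arguments within a bounded set, combined with the joint measurability implicit in $\rho, \rho' \in L^\infty(D;L^\infty(T))$. This is the only mildly technical step; the substantive inequality has already been done in \cref{thm:bound}, so once measurability is established the proof is a one-line integration. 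I would therefore handle measurability briefly (or cite it), then state the pointwise bound and integrate.
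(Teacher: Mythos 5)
Your proposal is correct and matches the paper's own (implicit) argument: the paper states that the lemma ``follows directly from \cref{thm:bound}'', i.e.\ precisely the pointwise application with $a=\eps$, $b=\eps^{-1}$ followed by squaring and integrating over $D$, which is what you do. Your brief attention to measurability is a reasonable extra care that the paper omits entirely.
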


We also recall the definition of the normalization operator $P_\sigma$, given a map $\sigma:\R\to\R_+$:
\[
(P_\sigma y)(x,t) = \frac{1}{Z_\sigma(x)}\sigma(y(x,t)),\quad Z_\sigma(x) = \int_T\sigma(y(x,t'))\,\dee t'.
\]
Under appropriate assumptions on the map $\sigma$, we have the following result.
\begin{lemma}
\label{lem:psig}
Let $\sigma:\R\to\R_+$ be locally Lipschitz, and given $r> 0$ let $y,y' \in Y_0$ with $\|y\|_{L^\infty(D;L^\infty(T))},\|y'\|_{L^\infty(D;L^\infty(T))} < r$. Then there exists $\eps(r) > 0$ such that $P_\sigma y,P_\sigma y' \in Y_{1,\eps(r)}$, and $L_\sigma(r)$ such that
\begin{align*}
\|P_\sigma y - P_\sigma y'\|_{L^2(D;L^2(T))} &\leq L_\sigma(r)\|y-y'\|_{L^2(D;L^2(T))},\\
\|P_\sigma y - P_\sigma y'\|_{L^2(D;\dot H^{-1}(T))} &\leq L_\sigma(r)\|y-y'\|_{L^2(D;\dot H^{-1}(T))}.
\end{align*}
\end{lemma}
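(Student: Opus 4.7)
My plan is to split the proof into three parts: (a) the inclusion $P_\sigma y, P_\sigma y' \in Y_{1,\eps(r)}$; (b) the $L^2$-Lipschitz estimate; and (c) the $\dot H^{-1}$-Lipschitz estimate, which I expect to be the main challenge.

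For (a), since $\sigma$ is locally Lipschitz and strictly positive it is continuous and bounded away from zero on the compact interval $[-r, r]$, attaining there a positive minimum $m=m(r)$ and finite maximum $M=M(r)$. The uniform bound $\|y\|_{L^\infty(D;L^\infty(T))} \le r$ then forces $\sigma(y(x,\cdot)) \in [m,M]$ pointwise, hence $Z_\sigma(x) \in [m|T|, M|T|]$, and so $(P_\sigma y)(x,t) \in [m/(M|T|), M/(m|T|)]$; taking $\eps(r) = \min(m/(M|T|), m|T|/M)$ gives the claim for both $y$ and $y'$.

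For (b), I use the decomposition (suppressing the $x$-dependence)
\[
P_\sigma y - P_\sigma y' \;=\; \frac{\sigma(y) - \sigma(y')}{Z_\sigma(y)} \;+\; \sigma(y')\,\frac{Z_\sigma(y') - Z_\sigma(y)}{Z_\sigma(y)\,Z_\sigma(y')},
\]
and estimate each term using the Lipschitz constant $L_\sigma = L_\sigma(r)$ of $\sigma$ on $[-r,r]$. The first summand is bounded in $L^2(T)$ by $(L_\sigma/(m|T|))\,\|y-y'\|_{L^2(T)}$; for the second, I apply Cauchy--Schwarz on the $t$-integral to get $|Z_\sigma(y')-Z_\sigma(y)| \le L_\sigma |T|^{1/2}\|y-y'\|_{L^2(T)}$, and combine with the $L^\infty$ bound on $\sigma(y')$ and the denominator bounds from (a). Squaring, integrating in $t$ and then in $x$, and absorbing the constants into $C_\sigma(r)$ finishes the $L^2$ estimate.

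For (c), I rewrite $P_\sigma y - P_\sigma y' = (g - \bar g\, P_\sigma y')/Z_\sigma(y)$, where $g = \sigma(y)-\sigma(y')$ and $\bar g(x) = Z_\sigma(y) - Z_\sigma(y')$; the numerator is mean-zero in $t$ and can be paired against test functions $\varphi \in \dot H^1(T)$ with $\int_T \varphi\,dt = 0$. Writing $g = h\,(y-y')$ via the mean-value representation $h(x,t) = \int_0^1 \sigma'(sy+(1-s)y')\,ds$ (well-defined a.e.\ with $\|h\|_\infty \le L_\sigma$), the dual pairing reduces to an integral of $(y-y')$ against a mean-zero function built from $h$, $\varphi$, and $P_\sigma y'$, which I then bound via $\dot H^{-1}\times\dot H^1$ duality by $\|y-y'\|_{\dot H^{-1}(T)}$ times a $\dot H^1(T)$ norm of the multiplier-twisted test. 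The main obstacle will be controlling products such as $\|h\varphi\|_{\dot H^1(T)}$ by $\|\varphi\|_{\dot H^1(T)}$: since $h$ is only bounded a priori, I expect to need either additional smoothness of $\sigma$ so that $h \in W^{1,\infty}$ (combined with the Poincar\'e inequality on the compact $T$ for mean-zero $\varphi$), or alternatively to route the estimate through \cref{lem:w2h1_equiv} by bounding $W_2(P_\sigma y, P_\sigma y')$ via a transport map constructed from $y$ and $y'$ and converting back to the $\dot H^{-1}$ norm using the upper bound of that lemma.
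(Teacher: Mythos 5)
Your parts (a) and (b) are correct, and (b) is in fact more careful than the paper's own argument: the paper passes in one step from $|(P_\sigma y)(x,t)-(P_\sigma y')(x,t)|$ to $\tfrac{1}{|T|k(r)}|\sigma(y(x,t))-\sigma(y'(x,t))|$, which is not a valid pointwise bound because it drops exactly the second term of your decomposition (the one coming from the difference of the normalizing constants $Z_\sigma$); your two-term estimate is what is actually needed, and it yields a constant of the same form as the paper's $C_\sigma(r)$.

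The substance is in (c), and your hesitation there is fully justified: the obstacle you identify is not a technicality but a genuine obstruction, and the second inequality is false for general locally Lipschitz --- even smooth, non-affine --- $\sigma$, because nonlinear composition moves energy from high to low frequencies, which the $\dot H^{-1}$ norm does not see in the input but does see in the output. Concretely, take $T=[0,2\pi]$, $\sigma(s)=1+s^2/2$, $y'(t)=\sin(Nt)$ and $y(t)=(1+\eps_0 b(t))\sin(Nt)$ with $b$ smooth and non-constant (up to a negligible mean correction so that $y\in Y_0$). Then $\|y-y'\|_{\dot H^{-1}(T)}\asymp \eps_0/N$, while $\sigma(y)-\sigma(y')=\tfrac{\eps_0}{4}(2b+\eps_0 b^2)(1-\cos(2Nt))$ contains the non-constant low-frequency term $\tfrac{\eps_0}{2}(b-\bar b)$, which survives the normalization $P_\sigma$ (the $Z_\sigma$ cross-term only adds a constant plus high frequencies here), so $\|P_\sigma y-P_\sigma y'\|_{\dot H^{-1}(T)}\gtrsim\eps_0$ uniformly in $N$ and the ratio blows up like $N$. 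Neither of your proposed repairs closes this: extra smoothness of $\sigma$ does not give $h\in W^{1,\infty}$ because $\nabla_T h$ involves $\nabla_T y$, which is not controlled by the $L^\infty$ hypotheses (and is of size $N$ in the example), and routing through $W_2$ via \cref{lem:w2h1_equiv} merely converts the problem back into the same $\dot H^{-1}$ estimate. For comparison, the paper's own proof treats $Z=L^2(T)$ and $Z=\dot H^{-1}(T)$ uniformly by writing $\|f\|_Z$ as $\sup\{\int_T|f|\varphi\,\dee t:\|\varphi\|_{Z^*}\le1\}$ and then applying the pointwise bound $|\sigma(y)-\sigma(y')|\le L(r)|y-y'|$; this is sound for $L^2$ but for $\dot H^{-1}$ it replaces $\|f\|_{\dot H^{-1}}$ by $\||f|\|_{\dot H^{-1}}$, which is not comparable to it, so the paper does not actually overcome the difficulty you flag. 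The second inequality does hold --- by your argument in (b), which then becomes linear --- when $\sigma$ is affine and positive on $[-r,r]$, and that is the hypothesis under which this part of the lemma (and the resulting $\dot H^{-1}$-stability of the $W_2$ and $M$ posteriors in \cref{thm:stability}) should be read.
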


\begin{proof}
The map $\sigma:\R\to\R_+$ is locally Lipschitz and hence locally bounded. It follows that since $|y|,|y'| < r$ almost everywhere, there exists $k(r) > 0$ such that $k(r) \leq \sigma(y(x,t)),\sigma(y'(x,t)) \leq k(r)^{-1}$ for almost all $(x,t) \in D\times T$, and so $P_\sigma y,P_\sigma y' \in Y_{1,\eps(r)}$ for some $\eps(r) > 0$ by construction of the map $P_\sigma$. 

With $Z = L^2(T)$ or $Z = \dot{H}^{-1}(T)$, we have that for all $x \in D$,
\begin{align*}
\|(P_\sigma y)(x,\cdot) - (P_\sigma y')(x,\cdot))\|_{Z} = \sup\left\{\int_T |(P_\sigma y)(x,t)-(P_\sigma y')(x,t)|\varphi(t)\,\dee t\,\bigg|\,\|\varphi\|_{Z^*} \leq 1\right\}.
\end{align*}
Again using the boundedness $|y|,|y'| < r$, we have $\int_T\sigma(y(x,t))\,\dee t,\int_T\sigma(y'(x,t))\,\dee t \geq |T|k(r) > 0$, and so for all $x \in D$
\begin{align*}
\|(P_\sigma y)(x,\cdot) - (P_\sigma y')(x,\cdot)\|_{Z} &\leq \frac{1}{|T|k(r)}\sup\left\{\int_T |\sigma(y(x,t))-\sigma(y'(x,t))|\varphi(t)\,\dee t\,\bigg|\,\|\varphi\|_{Z^*} \leq 1\right\}\\
&\leq \frac{L(r)}{|T|k(r)}\sup\left\{\int_T |y(x,t)-y'(x,t)|\varphi(t)\,\dee t\,\bigg|\,\|\varphi\|_{Z^*} \leq 1\right\}\\
&= \frac{L(r)}{|T|k(r)}\|y-y'\|_{Z}.
\end{align*}
where $L(r)$ is the local Lipschitz coefficient of $\sigma$. Squaring and integrating over $D$ gives the result.
\end{proof}

In the theory that follows we will make the use of the following assumptions.
\begin{assumptions}
\label{ass:wp}
The data $y$, forward map $\G$ and prior $\pi_0$ satisfy the following:
\begin{enumerate}[(i)]
\item $y \in Y_0$;
\item $\G:X\to Y_0$ is continuous with respect to the $L^2(D;L^2(T))$ norm $\pi_0$-almost surely; and
\item there exists an increasing function $R_\G:\R_+\to\R_+$ such that $\|\G(u)\|_{L^\infty(D;L^\infty(T))} \leq R_\G(\|u\|_X)$ for all $u \in X$.
\end{enumerate}
\end{assumptions}

\subsection{Existence and Well-posedness}

We recall the definitions of the four loss functions $\Phi_{\cdot}:X\times Y\to\R$ introduced in \cref{sec:Bayesian}, and define the norms we equip the corresponding data spaces $Y = Y_0$ with:
\begin{align*}
\Phi_{L^2}(u;y) &= \frac{1}{2}\int_D \|\G(u)(x,\cdot)-y(x,\cdot)\|_{L^2(T)}^2\,\m(\dee x), \quad \|\cdot\|_Y = \|\cdot\|_{L^2(D;L^2(T))},\\
\Phi_{H^{-1}}(u;y) &= \frac{1}{2}\int_D\|\G(u)(x,\cdot)-y(x,\cdot)\|_{\dot H^{-1}(T)}^2\,\m(\dee x), \|\cdot\|_Y = \|\cdot\|_{L^2(D;\dot H^{-1}(T))},\\
\Phi_{M}(u;y) &= \frac{1}{2}\int_D\left\|\frac{(P_\sigma \G(u))(x,\cdot)-(P_\sigma y)(x,\cdot)}{(P_\sigma y)(x,\cdot)}\right\|_{L^2(T)}^2\,\m(\dee x), \quad \|\cdot\|_Y = \|\cdot\|_{L^2(D;L^2(T))},\\
\Phi_{W_2}(u;y) &= \frac{1}{2}\int_D W_2\left((P_\sigma \G(u))(x,\cdot),(P_\sigma y)(x,\cdot)\right)^2\,\m(\dee x), \quad \|\cdot\|_Y = \|\cdot\|_{L^2(D;\dot{H}^{-1}(T))}.
\end{align*}

We may then establish existence and well-posedness of the corresponding (Gibbs) posterior distributions:
\begin{theorem}[Existence] \label{thm:existence}
Let $\pi_0$ be a Borel probability measure on $X$ and let \cref{ass:wp} hold. Then for any choice $\Phi \in \{\Phi_{L^2},\Phi_{H^{-1}},\Phi_M,\Phi_{W_2}\}$,
\[
Z_\Phi(y) = \int_X \exp(-\Phi(u;y))\,\pi_0(\dee u)
\]
is strictly positive and finite, and
\[
\pi_\Phi^y(\dee u) := \frac{1}{Z_\Phi(y)}\exp\left(-\Phi(u;y)\right)\,\pi_0(\dee u)
\]
defines a Radon probability measure on $X$.
\end{theorem}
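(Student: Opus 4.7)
The plan is to apply the abstract Bayes-theorem framework of \cite{sullivan2017well}, which requires for each potential $\Phi$ (i) Borel measurability of $\Phi(\cdot;y):X\to\R$, (ii) $Z_\Phi(y) < \infty$, and (iii) $Z_\Phi(y) > 0$. Finiteness is immediate because each of the four potentials is manifestly non-negative (either a squared norm or a squared $W_2$ distance integrated against the non-negative measure $\lambda$), so $\exp(-\Phi(u;y)) \leq 1$ and $Z_\Phi(y) \leq \pi_0(X) = 1$. Positivity will come from a local-boundedness estimate of the form $\Phi(u;y) \leq M_\Phi(\|u\|_X)$ with $M_\Phi:\R_+\to\R_+$ increasing; combined with the observation that $\pi_0(B_R)\to 1$ as $R\to\infty$, where $B_R=\{u\in X : \|u\|_X\leq R\}$, and hence $\pi_0(B_R)>0$ for some $R$, this gives $Z_\Phi(y)\geq \exp(-M_\Phi(R))\pi_0(B_R)>0$.

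For the local bound $M_\Phi$ and the continuity of $\Phi(\cdot;y)$, I exploit \Cref{ass:wp} together with \Cref{lem:w2h1_equiv,lem:psig}. For $\Phi_{L^2}$ and $\Phi_{H^{-1}}$, \Cref{ass:wp}(iii) combined with $y\in Y_0\subset L^\infty(D;L^\infty(T))$, $\lambda(D)<\infty$, and $T$ compact delivers the required quadratic estimate, using the continuous embedding $L^2(T)\hookrightarrow \dot H^{-1}(T)$ on mean-zero functions in the second case; continuity of $\G$ into $L^2(D;L^2(T))$ then yields continuity of the potential. For $\Phi_M$ and $\Phi_{W_2}$, set $r(R) = \max(C_\G(R),\|y\|_{L^\infty(D;L^\infty(T))})$: \Cref{lem:psig} supplies $\eps(r)>0$ with $P_\sigma\G(u),P_\sigma y\in Y_{1,\eps(r)}$ for every $u\in B_R$, together with the Lipschitz estimates $\|P_\sigma\G(u)-P_\sigma y\|_{L^2(D;Z)}\leq C_\sigma(r)\|\G(u)-y\|_{L^2(D;Z)}$ for $Z\in\{L^2(T),\dot H^{-1}(T)\}$. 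Dividing by $P_\sigma y \geq \eps(r)$ for $\Phi_M$, and invoking \Cref{lem:w2h1_equiv} (upper bound) for $\Phi_{W_2}$, then produces the required $M_\Phi$.

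The main obstacle is continuity of $\Phi_{W_2}(\cdot;y)$, since $W_2$ is nonlinear and \Cref{ass:wp}(ii) only supplies $L^2(D;L^2(T))$ continuity of $\G$. The strategy is to sandwich, for any $u, u' \in B_R$,
\[
\int_D W_2\bigl((P_\sigma\G(u))(x,\cdot),(P_\sigma\G(u'))(x,\cdot)\bigr)^2\,\lambda(\dee x) \leq \eps(r)^{-1}\|P_\sigma\G(u)-P_\sigma\G(u')\|_{L^2(D;\dot H^{-1}(T))}^2,
\]
using \Cref{lem:w2h1_equiv}; \Cref{lem:psig} then dominates the right-hand side by $C\|\G(u)-\G(u')\|_{L^2(D;\dot H^{-1}(T))}^2$, and the mean-zero embedding $L^2(T)\hookrightarrow \dot H^{-1}(T)$ on compact $T$ reduces this to $L^2(D;L^2(T))$, on which $\G$ is continuous. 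Coupling this with the elementary inequality $|W_2(a,c)^2 - W_2(b,c)^2| \leq W_2(a,b)\bigl(W_2(a,c)+W_2(b,c)\bigr)$ and the uniform Wasserstein bound on $B_R$ from the local estimate yields continuity of $u\mapsto \Phi_{W_2}(u;y)$ on each $B_R$, hence on $X$. With measurability and the local upper bound established for all four potentials, Sullivan's theorem delivers a well-defined probability measure $\pi_\Phi^y$; the Radon property is inherited because $\pi_\Phi^y$ is absolutely continuous with respect to $\pi_0$ with bounded Radon--Nikodym derivative.
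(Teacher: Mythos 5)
Your proposal is correct and follows essentially the same route as the paper: both invoke the abstract framework of Sullivan's well-posedness theorem and reduce the problem to measurability (via continuity of $\G$ and the norms), finiteness of $Z_\Phi(y)$ (from non-negativity of each potential), and positivity (from a local upper bound built out of \cref{ass:wp}(iii), \cref{lem:psig} and \cref{lem:w2h1_equiv}). The only difference is presentational — you unpack the positivity/finiteness conclusions and spell out the $W_2$ continuity argument that the paper leaves to a reference back to the $\dot H^{-1}$ case — so no substantive comparison is needed.
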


\begin{proof}
We show that the assumptions of Theorem 4.3 in \cite{sullivan2017well} are satisfied for each choice of $\Phi$. Note that in \cite{sullivan2017well}, the data space is assumed to be a quasi-Banach space, and so in particular complete, however, studying the proofs therein this completeness property is never used -- an incomplete norm structure on the data space suffices to obtain the same results. This is important for our applications, as we equip $Y$ with different incomplete norms. We also note that continuity of the likelihood in the data component is not required for the proof of existence, either by examining the proof in \cite{sullivan2017well} or using Theorem 2.4 in \cite{latz2019well}.

We must establish the following properties for each choice of $\Phi$:
\begin{enumerate}[(i)]
\item $\Phi(\cdot;y)$ is measurable for each $y \in Y$;
\item For each $r>0$ there exists $M_{0,r} \in \R$ such that for each $u\in X$, $y \in Y$ with $\|u\|_X, \|y\|_Y < r$, $|\Phi(u;y)| \leq M_{0,r}$.
\item For each $r>0$ there exists a measurable $M_{1,r}:\R_+\to\R$ such that for each $u \in X$, $y \in Y$ with $\|y\|_Y < r$, $\Phi(u;y) \geq M_{1,r}(\|u\|_X)$, and
\[
\int_X \exp(-M_{1,r}(\|u\|_X))\,\pi_0(\dee u) < \infty.
\]
\end{enumerate}
The latter is trivially satisfied for any probability measure $\pi_0$ by all choices of $\Phi$ considered with $M_{1,r}(\|u\|)\equiv 0$. We verify the other two properties for each $\Phi$ in turn.

\begin{itemize}
\item[$\boxed{\Phi_{L^2}}$]
\begin{enumerate}[(i)]
\item The map $\G$ is assumed to be $\pi_0$-a.s continuous from $X$ into $Y_0$ equipped with the $L^2(D;L^2(T))$ topology, and the $L^2(D;L^2(T))$ norm is continuous with respect to its own topology, so measurability follows.
\item Fix $r > 0$ and choose $u\in X$, $y \in Y$ with $\|u\|_X,\|y\|_Y < r$, then by assumption $\|\G(u)\|_{L^2(D;L^2(T))}^2 \leq \lambda(D)|T|\|\G(u)\|_{L^\infty(D;L^\infty(T))} \leq \lambda(D)|T|R_\G(r)^2$. We may then bound
\begin{align*}
|\Phi_{L^2}(u;y)| &\leq \frac{1}{2}(\|\G(u)\|_{L^2(D;L^2(T))} + \|y\|_{L^2(D;L^2(T))})^2\\
&\leq \frac{1}{2}\m(D)|T|(\|\G(u)\|_{L^\infty(D;L^\infty(T))} + \|y\|_{L^\infty(D;L^\infty(T))})^2\\
&\leq \frac{1}{2}\m(D)|T|(R_\G(r)+r)^2 =: M_{0,r}.
\end{align*}
\end{enumerate}

\item[$\boxed{\Phi_{H^{-1}}}$]
\begin{enumerate}[(i)]
\item  The map $\G$ is assumed to be $\pi_0$-a.s continuous from $X$ into $Y_0$ equipped with the $L^2(D;L^2(T))$ topology, and the embedding of $(Y_0,\|\cdot\|_{L^2(D;\dot L^2(T))})$ into\linebreak $(Y_0,\|\cdot\|_{L^2(D;\dot H^{-1}(T))})$ is continuous. Since the $L^2(D;\dot H^{-1}(T))$ norm is continuous with respect to its own topology, measurability follows.
\item Choose such $u\in X$, $y \in Y$. Then we obtain a similar bound $M_{0,r}$ as for $\Phi_{L^2}$, using that
\[
\|\G(u)\|_{L^2(D;\dot H^{-1}(T))}^2 \leq C_p(T)\|\G(u)\|_{L^2(D;L^2(T))}^2 \leq C_p(T)\lambda(D)|T|R_\G(r)^2,
\]
where $C_p(T)$ is the Poincar\'e constant of the domain $T$.
\end{enumerate}

\item[$\boxed{\Phi_{M}}$]
\begin{enumerate}[(i)]
\item By assumption $\G$ is $\pi_0$-a.s continuous into $Y_0$ equipped with the $L^2(D;L^2(T))$ norm. By \cref{lem:psig} the map $P_\sigma$ is continuous, as is the mapping $\rho\mapsto (\rho-P_\sigma y)/P_\sigma y$ since $P_\sigma y > 0$ for all $y \in Y_0$. Measurability then follows by continuity of the $L^2(D;L^2(D))$ norm. 
\item Choose such $u\in X$, $y \in Y$, then $\|\G(u)\|_{L^\infty(D;L^\infty(T))} \leq R_\G(r)$; without loss of generality assume $R_\G(r) \geq r$. By \cref{lem:psig}, there exists $\eps(r) > 0$ such that $P_\sigma(y) \geq \eps(r)$, and so
\begin{align*}
|\Phi_M(u;y)| &\leq \frac{1}{2}\eps(r)^{-2}\|P_\sigma \G(u)-P_\sigma y\|_{L^2(D;L^2(T))}^2\\
&\leq \frac{1}{2}\eps(r)^{-2}L_\sigma(R_\G(r))^2\|\G(u)-y\|_{L^2(D;L^2(T))}^2.
\end{align*}
The bound now follows from the bound for $\Phi_{L^2}$.
\end{enumerate}

\item[$\boxed{\Phi_{W}}$]
\begin{enumerate}[(i)]
\item This follows from \cref{lem:w2h1_equiv} and the above arguments for $\Phi_{\dot H^{-1}}$.
\item Choose such $u\in X$, $y \in Y$, then $\|\G(u)\|_{L^\infty(D;L^\infty(T))} \leq R_\G(r)$; again without loss of generality assume $R_\G(r) \geq r$. Using \cref{lem:psig}, there exist $\eps(r) > 0, \tilde\eps(r) > 0$ such that $P_\sigma y \in Y_{1,\eps(r)},P_\sigma \G(u) \in Y_{1,\tilde\eps(r)}$. We may therefore use \cref{lem:w2h1_equiv} and \cref{lem:psig} again to bound
\begin{align*}
|\Phi_{W_2}(u;y)| &\leq \frac{1}{2}(\eps(r)\wedge{\tilde\eps(r)})^{-1}\|P_\sigma \G(u)-P_\sigma y\|_{L^2(D;\dot H^{-1}(T))}^2\\
&\leq \frac{1}{2}(\eps(r)\wedge{\tilde\eps(r)})^{-1}L_\sigma(R_\G(r))^2\|\G(u)-y\|_{L^2(D;\dot H^{-1}(T))}^2\\
&\leq \frac{1}{2}(\eps(r)\wedge{\tilde\eps(r)})^{-1}L_\sigma(R_\G(r))^2\m(D)|T|\|\G(u)-y\|_{L^\infty(D;L^\infty(T))}^2\\
&\leq \frac{1}{2}(\eps(r)\wedge{\tilde\eps(r)})^{-1}L_\sigma(R_\G(r))^2\m(D)|T|(R_\G(r)+r)^2 =: M_{0,r},
\end{align*}
where $a\wedge b$ denotes $\min\{a,b\}$.
\end{enumerate}
\end{itemize}
\end{proof}

The following analysis will demonstrate the regularity of the posterior with respect to perturbations of the data. First, we assert the following lemma regarding the local Lipschitz properties of the potentials that are considered here.

\begin{lemma}~\label{lem:lip}
Let \cref{ass:wp}(iii) hold and choose $\Phi \in \{\Phi_{L^2},\Phi_{H^{-1}},\Phi_M,\Phi_{W_2}\}$. For each $r > 0$ there exists $M_{2,r}:\R_+\to\R_+$ such that for each $u \in X$, $y,y' \in Y$ with $\|y\|_{L^\infty(D;L^\infty(T))},\|y'\|_{L^\infty(D;L^\infty(T))} < r$,
\[
|\Phi(u;y)-\Phi(u;y')| \leq M_{2,r}(\|u\|_X)\|y-y'\|_Y.
\]
\end{lemma}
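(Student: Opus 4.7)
The plan is to exploit the uniform structure of the four potentials: each is half of an integrated squared norm, so differences can be factored via the elementary identity $|a^2 - b^2| = |a-b|(a+b)$. Applied pointwise in $x \in D$ before integration, this reduces the task to bounding a ``sum factor'' uniformly using \cref{ass:wp}(iii) and the hypothesis $\|y\|_{L^\infty(D;L^\infty(T))},\|y'\|_{L^\infty(D;L^\infty(T))} < r$, and bounding a ``difference factor'' linearly in $\|y-y'\|_Y$ using the reverse triangle inequality (for $\Phi_{L^2}$ and $\Phi_{H^{-1}}$) or the triangle inequality for $W_2$ together with the $P_\sigma$-stability of \cref{lem:psig} (for $\Phi_M$ and $\Phi_{W_2}$). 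A single application of Cauchy-Schwarz in $L^2(D,\lambda)$ then produces the desired norm.

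First I would handle $\Phi_{L^2}$: with $a(x) = \|\G(u)(x,\cdot) - y(x,\cdot)\|_{L^2(T)}$ and $b(x)$ defined analogously using $y'$, the reverse triangle inequality gives $|a(x)-b(x)| \leq \|y(x,\cdot)-y'(x,\cdot)\|_{L^2(T)}$, while $a(x)+b(x)$ is bounded pointwise using the $L^\infty$ control from \cref{ass:wp}(iii) and finiteness of $\lambda(D)|T|$. Integration and Cauchy-Schwarz yield the Lipschitz bound with $M_{2,r}(\|u\|_X)$ proportional to $C_\G(\|u\|_X)+r$. The case $\Phi_{H^{-1}}$ is identical after replacing $L^2(T)$ with $\dot H^{-1}(T)$; the sum factor is still controlled by the $L^\infty$ bounds, now via Poincar\'e's inequality on $T$.

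For $\Phi_{W_2}$ the same factorization applies with $a(x) = W_2((P_\sigma\G(u))(x,\cdot),(P_\sigma y)(x,\cdot))$ and $b(x)$ defined analogously. \Cref{lem:psig} places $P_\sigma y$, $P_\sigma y'$, and $P_\sigma\G(u)$ simultaneously in a common $Y_{1,\eps}$ with $\eps$ depending only on $r$ and $\|u\|_X$; \cref{lem:w2h1_equiv} and Poincar\'e's inequality then give uniform pointwise upper bounds on $a(x),b(x)$. The triangle inequality for $W_2$ yields $|a(x)-b(x)| \leq W_2((P_\sigma y)(x,\cdot),(P_\sigma y')(x,\cdot))$. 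Integrating, applying Cauchy-Schwarz, and invoking \cref{lem:w2h1_equiv} once more converts the $W_2$ integral into $\|P_\sigma y - P_\sigma y'\|_{L^2(D;\dot H^{-1}(T))}$, which \cref{lem:psig} bounds by $\|y-y'\|_{L^2(D;\dot H^{-1}(T))}$, as required.

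The subtlest case is $\Phi_M$, since the perturbed variable appears in both the numerator and the denominator. The key algebraic step is the rearrangement
\[
\frac{P_\sigma\G(u) - P_\sigma y}{P_\sigma y} - \frac{P_\sigma\G(u) - P_\sigma y'}{P_\sigma y'} = \frac{P_\sigma\G(u)\,(P_\sigma y' - P_\sigma y)}{P_\sigma y \cdot P_\sigma y'},
\]
which, combined with the uniform lower bound $P_\sigma y, P_\sigma y' \geq \eps(r)$ from \cref{lem:psig} and a uniform $L^\infty$ bound on $P_\sigma\G(u)$, controls the difference factor by a constant multiple of $\|P_\sigma y - P_\sigma y'\|_{L^2(T)}$, which \cref{lem:psig} converts into $\|y - y'\|_{L^2(T)}$. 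The sum factor is handled as for $\Phi_{L^2}$ using boundedness of the ratios. Tracking the $r$- and $\|u\|_X$-dependencies through $\eps(r)$, $C_\sigma(r)$, $C_\G(\|u\|_X)$, and $C_\sigma(C_\G(\|u\|_X))$ is the principal bookkeeping obstacle, but no further idea is needed beyond the above.
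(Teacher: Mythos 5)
Your proposal is correct and follows essentially the same route as the paper's proof: the difference-of-squares/triangle-inequality factorization for $\Phi_{L^2}$, $\Phi_{H^{-1}}$ and $\Phi_{W_2}$ (with \cref{lem:w2h1_equiv} and \cref{lem:psig} supplying the $W_2$-to-$\dot H^{-1}$ conversion and the $P_\sigma$-stability), and the rearrangement $\frac{P_\sigma\G(u)}{P_\sigma y}-\frac{P_\sigma\G(u)}{P_\sigma y'}=P_\sigma\G(u)\frac{P_\sigma y'-P_\sigma y}{P_\sigma y\, P_\sigma y'}$ for $\Phi_M$, are exactly the paper's key steps. The only cosmetic deviation is that for the quadratic potentials the paper expands the squared norms via inner products globally in $Y$ rather than factoring $|a^2-b^2|$ pointwise in $x$; the resulting constants $M_{2,r}(\|u\|_X)$ are the same.
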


\begin{proof}
We verify for each choice of $\Phi$ in turn; given such a $\Phi$, fix $r > 0$, and choose $u \in X$, $y,y' \in Y$ with $\|y\|_{L^\infty(D;L^\infty(T))},\|y'\|_{L^\infty(D;L^\infty(T))} < r$.
\begin{itemize}
\item[$\boxed{\Phi_{L^2}}$]
We have that $\|\cdot\|_Y = \|\cdot\|_{L^2(D;L^2(T))}$. We may calculate, using the assumption,
\begin{align*}
|\Phi_{L^2}(u;y)-\Phi_{L^2}(u;y')| &= \left|\la \G(u),y'\ra_Y - \la \G(u),y\ra_Y + \frac{1}{2}\|y\|_Y^2 - \frac{1}{2}\|y'\|_Y^2 \right|\\
&\leq |\la \G(u),y-y'\ra_Y| + \frac{1}{2}\left|\|y\|_Y^2-\|y'\|_Y^2\right|\\
&\leq \|\G(u)\|_Y\|y-y'\|_Y + \frac{1}{2}(\|y\|_Y+\|y'\|_Y)\|y-y'\|_Y\\
&\leq \lambda(D)^{1/2}|T|^{1/2}(R_\G(\|u\|_X)+r)\|y-y'\|_Y,
\end{align*}
and so we may take $M_{2,r}(\|u\|_X) = \lambda(D)^{1/2}|T|^{1/2}(R_\G(\|u\|_X)+r)$.

\item[$\boxed{\Phi_{H^{-1}}}$]
We have that $\|\cdot\|_Y = \|\cdot\|_{L^2(D;\dot H^{-1}(T))}$. We may calculate similarly to the $L^2$ case above to deduce that we may take
\[
M_{2,r}(\|u\|_X) =C_p(T)^{1/2}\lambda(D)^{1/2}|T|^{1/2}(R_\G(\|u\|_X)+r),
\]
where $C_p(T)$ is the Poincar\'e constant of the domain $T$.

\item[$\boxed{\Phi_{M}}$]
We have that $\|\cdot\|_Y = \|\cdot\|_{L^2(D;L^2(T))}$. Using \cref{lem:psig}, we have that $P_\sigma y,P_\sigma y' \in Y_{1,\eps(r)}$. Using the assumption, we may then calculate, again using \cref{lem:psig},
\begin{align*}
|\Phi_M(u;y)&-\Phi_M(u;y')| = \frac{1}{2}\left|\left\|\frac{P_\sigma \G(u)-P_\sigma y}{P_\sigma y}\right\|_Y^2 - \left\|\frac{P_\sigma \G(u)-P_\sigma y'}{P_\sigma y'}\right\|_Y^2\right|\\
&= \frac{1}{2}\left|\left\|\frac{P_\sigma \G(u)}{P_\sigma y}\right\|_Y^2 - \left\|\frac{P_\sigma \G(u)}{P_\sigma y'}\right\|_Y^2 + 2\left\langle\frac{P_\sigma\G(u)}{P_\sigma y'}-\frac{P_\sigma \G(u)}{P_\sigma y},1\right\rangle_Y\right|\\
&\leq \left(\left\|\frac{P_\sigma \G(u)}{P_\sigma y}\right\|_Y+\left\|\frac{P_\sigma \G(u)}{P_\sigma y'}\right\|_Y + \|1\|_Y\right)\left\|\frac{P_\sigma \G(u)}{P_\sigma y}-\frac{P_\sigma \G(u)}{P_\sigma y' }\right\|_Y\\
&\leq \left(2\eps(r)^{-1}\left\|P_\sigma \G(u)\right\|_Y + \lambda(D)^{1/2}|T|^{1/2}\right)\\
&\hspace{4cm}\times \eps(r)^{-2}\|P_\sigma \G(u) \|_Y\left\|P_\sigma y-P_\sigma y'\right\|_Y\\
&\leq \left(2\eps(r)^{-1}\eps(R_\G(\|u\|_X))^{-1} + \lambda(D)^{1/2}|T|^{1/2}\right)\\
&\hspace{4cm}\times \eps(r)^{-2}\eps(R_\G(\|u\|_X))^{-1}L_\sigma(r)\left\|y-y'\right\|_Y,
\end{align*}
which provides $M_{2,r}(\|u\|_X)$.

\item[$\boxed{\Phi_{W_2}}$]
We have that $\|\cdot\|_Y = \|\cdot\|_{L^2(D;\dot H^{-1}(T))}$. By \cref{lem:psig}, there exist $\eps(r),\tilde\eps(\|u\|_X) > 0$ such that $P_\sigma y,P_\sigma y' \in Y_{1,\eps(r)}$, $P_\sigma \G(u) \in Y_{1,\tilde\eps(\|u\|_X)}$. Using the (reverse) triangle inequality for $W_2(\cdot,\cdot)$, Cauchy-Schwarz, \cref{lem:w2h1_equiv,lem:psig} and the assumption, we may calculate
\begin{equation*}
\begin{split}
|&\Phi_{W_2}(u;y)-\Phi_{W_2}(u;y')|\\
&\leq \frac{1}{2}\int_D \big|W_2((P_\sigma\G(u))(x,\cdot),(P_\sigma y)(x,\cdot))^2 - W_2((P_\sigma \G(u))(x,\cdot),(P_\sigma y')(x,\cdot))^2\big|\,\m(\dee x)\\
&\leq \frac{1}{2}\int_D \left|W_2((P_\sigma\G(u))(x,\cdot),(P_\sigma y)(x,\cdot)) + W_2((P_\sigma \G(u))(x,\cdot),(P_\sigma y')(x,\cdot))\right|\\
&\hspace{7.6cm}\times W_2((P_\sigma y)(x,\cdot),(P_\sigma y')(x,\cdot))\,\m(\dee x)\\
&\leq (\tilde\eps(\|u\|_X)\wedge\eps(r))^{-1/2}\eps(r)^{-1/2}(\|P_\sigma\G(u)-P_\sigma y\|_Y + \|P_\sigma \G(u)-P_\sigma y'\|_Y)\\
&\hspace{10.7cm}\times\|P_\sigma y-P_\sigma y'\|_Y\\
&\leq 2(\tilde\eps(\|u\|_X)\wedge\eps(r))^{-1/2}\eps(r)^{-1/2}C_p(T)^{1/2}\lambda(D)^{1/2}\\
&\hspace{6.9cm}\times(\tilde\eps(\|u\|_X)^{-1} + \eps(r)^{-1})L_\sigma(r)\|y-y'\|_Y,
\end{split}
\end{equation*}
which provides $M_{2,r}(\|u\|_X)$.
\end{itemize}
\end{proof}

The following now follows from \cref{lem:lip} and Theorem 4.4 in \cite{sullivan2017well}.
\begin{theorem}[Well-posedness] \label{thm:stability}
Let $\pi_0$ be a Borel probability measure on $X$ and let \cref{ass:wp} hold. Choose $\Phi \in \{\Phi_{L^2},\Phi_{H^{-1}},\Phi_M,\Phi_{W_2}\}$, let $r > 0$ and assume
\[
S_{\Phi,r} := \int_X M_{2,r}(\|u\|_X)^2\,\pi_0(\dee u) < \infty,
\]
where $M_{2,r}(\cdot)$ is the corresponding function from \cref{lem:lip}. Then there exists $C_\Phi(r) > 0$ such that for all $y,y' \in Y$ with $\|y\|_{L^\infty(D;L^\infty(T))},\|y'\|_{L^\infty(D;L^\infty(T))} < r$,
\begin{align*}
\dH(\pi_{\Phi}^y,\pi_{\Phi}^{y'}) \leq C_\Phi(r)\|y-y'\|_{Y}.
\end{align*}
\end{theorem}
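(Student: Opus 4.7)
The plan is to invoke the standard template for Hellinger well-posedness of (Gibbs) posteriors developed in \cite{dashti2016bayesian} and abstracted as Theorem~4.4 of \cite{sullivan2017well}, feeding in the local Lipschitz bound of \cref{lem:lip} as the key quantitative input. Writing out the densities with respect to the prior, I would start from
\[
\dH(\pi_\Phi^y,\pi_\Phi^{y'})^2 = \int_X \left(\frac{e^{-\Phi(u;y)/2}}{\sqrt{Z_\Phi(y)}} - \frac{e^{-\Phi(u;y')/2}}{\sqrt{Z_\Phi(y')}}\right)^2 \pi_0(\dee u),
\]
add and subtract $Z_\Phi(y)^{-1/2}e^{-\Phi(u;y')/2}$, and use $(a+b)^2 \leq 2a^2+2b^2$ to split into a pointwise-Lipschitz term and a normalization-constant term.

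The first preparatory step is to control $Z_\Phi(y)$ from above and below, uniformly in $y$ with $\|y\|_{L^\infty(D;L^\infty(T))} < r$. The upper bound is trivial since $\Phi \geq 0$, so $Z_\Phi(y) \leq 1$. The lower bound comes from the existence argument: by property~(ii) verified in the proof of \cref{thm:existence}, $\Phi(u;y) \leq M_{0,r}$ on the ball $\{\|u\|_X < r\}$, so $Z_\Phi(y) \geq e^{-M_{0,r}}\pi_0(\{\|u\|_X<r\}) =: c_r$; by enlarging $r$ if necessary the prior mass is positive, giving $c_r > 0$.

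For the pointwise term, the elementary inequality $|e^{-a/2}-e^{-b/2}| \leq \tfrac{1}{2}|a-b|$ for $a,b\geq 0$ combined with \cref{lem:lip} gives
\[
\int_X \left(e^{-\Phi(u;y)/2} - e^{-\Phi(u;y')/2}\right)^2 \pi_0(\dee u) \;\leq\; \tfrac{1}{4}\|y-y'\|_Y^2 \int_X M_{2,r}(\|u\|_X)^2\,\pi_0(\dee u) = \tfrac{1}{4}S_{\Phi,r}\|y-y'\|_Y^2.
\]
For the normalization term, an analogous application of \cref{lem:lip} together with Cauchy--Schwarz produces $|Z_\Phi(y)-Z_\Phi(y')| \leq S_{\Phi,r}^{1/2}\|y-y'\|_Y$, and the lower bound $Z_\Phi \geq c_r$ turns this into a Lipschitz bound on $Z_\Phi^{-1/2}$ via $|Z^{-1/2}-Z'^{-1/2}| \leq \tfrac{1}{2}c_r^{-3/2}|Z-Z'|$. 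Collecting everything, $\dH(\pi_\Phi^y,\pi_\Phi^{y'}) \leq C_\Phi(r)\|y-y'\|_Y$ with $C_\Phi(r)$ depending only on $r$, $c_r$ and $S_{\Phi,r}$.

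The only genuine obstacle is verifying that the Lipschitz modulus $M_{2,r}(\|u\|_X)$ from \cref{lem:lip} is integrable in the right sense; this is precisely the role of the hypothesis $S_{\Phi,r}<\infty$ in the statement, which is used both directly in the pointwise term and through Cauchy--Schwarz in the normalization term. No additional smoothness or growth on $\G$ beyond \cref{ass:wp}(iii) is required, since \cref{lem:lip} has already absorbed the $\G$-dependence into $M_{2,r}$.
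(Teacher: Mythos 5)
Your proposal is correct and follows essentially the same route as the paper, which simply invokes Theorem~4.4 of \cite{sullivan2017well} together with \cref{lem:lip}; what you have written out is precisely the standard proof of that cited abstract result (uniform upper/lower bounds on $Z_\Phi$, the inequality $|e^{-a/2}-e^{-b/2}|\le\tfrac12|a-b|$ for $a,b\ge0$, and the $S_{\Phi,r}<\infty$ hypothesis via Cauchy--Schwarz). The only point to state a little more carefully is the lower bound on $Z_\Phi$: one should enlarge the radius of the ball in $X$ over which $M_{0,\cdot}$ is applied until it carries positive prior mass, while leaving the data radius $r$ (and hence the hypothesis $S_{\Phi,r}<\infty$) untouched.
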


\begin{remark}

We address that $\|\cdot\|_Y = \|\cdot\|_{L^2(D;\dot H^{-1}(T))}$ for $\Phi_{H^{-1}}$ and $\Phi_{W_2}$ while $\|\cdot\|_Y = \|\cdot\|_{L^2(D;L^2(T))}$ for $\Phi_{L^2}$ and $\Phi_{M}$ from the proof of~\cref{lem:lip}. In particular, in combination with~\cref{thm:stability}, we have the following two stability results for $W_2$-based posterior and $L^2$-based posterior
$$d_{\mathrm{H}}(\pi_{\Phi_{W_2}}^y,\pi_{\Phi_{W_2}}^{y'}) \leq C_{W_2}(r)\|y-y'\|_{L^2(D;\bm{\dot H^{-1}(T)})},$$ $$d_{\mathrm{H}}(\pi_{\Phi_{L^2}}^y,\pi_{\Phi_{L^2}}^{y'}) \leq C_{L^2}(r)\|y-y'\|_{L^2(D;\bm{L^2(T)})},$$
where $d_{\mathrm{H}}$ represents the Hellinger distance. 

The main difference between these two stability estimates lies in the functional spaces for the noise $y-y'$ defined on the domain $T\subseteq\R^s$. As an $L^2$-based Sobolev seminorm, $\dot H^{-1}$ can be considered as a weighted $L^2$ norm with a weighting factor being asymptotically $|\mathbf{\boldsymbol \xi}|^{-1}$ where $\boldsymbol{\xi}$ is the frequency. Thus, the high-frequency content of the input is imposed with a much smaller weight compared to the low-frequency content of the input. In another word, $\|y-y'\|_{L^2(D;\bm{\dot H^{-1}(T)})}$ is much smaller than $\|y-y'\|_{L^2(D;\bm{L^2(T)})}$ given high-frequency noise $y-y'$. These two different estimates consequently lead to a stronger stability result for $W_2$-based posterior and $\dot H^{-1}$-based posterior in the regime of high-frequency noises.

\cref{thm:stability} is another theoretical derivation that demonstrates the better stability of the $W_2$ metric, but in a Bayesian inference framework in the context of solving Bayesian inverse problems. The result is well-aligned with the analysis for the deterministic approach for inverse data matching problems~\cite{ERY2019}. The asymptotic and non-asymptotic connection between the $W_2$ metric and the $\dot H^{-1}$ seminorm play the central role in achieving such estimates as weaker norms smooth out the input.

\end{remark}

\begin{remark}
\label{rem:m2bound}
Key to the applying this result is the assumption that $S_{\Phi,r} < \infty$, which from the definitions of $M_{2,r}(\cdot)$ in \cref{lem:lip} will depend on the growth rate of the forward map $\G$ and the structure of the density mapping $P_\sigma$, as well as the prior $\pi_0$. In the main application considered in this article, FWI, $\G$ will be uniformly bounded and so ensuring this condition holds involves balancing the growth/decay rates of the map $\sigma$ with the tails of the prior. The Gaussian and level-set  priors discussed in \cref{ssec:prior} all have at least squared exponential moments, and so relatively weak conditions are required on $\sigma$ to ensure the required integrability.
\end{remark}

\subsection{Application to Full Waveform Inversion}~\label{sec:fwi}
We show that the above theory is applicable when the forward model is taken to be that described in \cref{ssec:fwd}, with the choices of priors described in \cref{ssec:prior}. Specifically, we define the forward map $\G:X\to Y$ as follows.

Denote $\R^d_U = \R^{d-1}\times\R_+$ the upper half space, and $T\subset (0,\infty)$ the temporal domain with $|T|<\infty$. 
Let $\{s_j\}_{j=1}^{N_s}$ be a collection of point sources, where $s_j(x,t) = w(t)\delta(x-x_j)$, $x_j\in \R^d_U$, and the time-dependent function $w(t) \in H^1(T) \subset L^{\infty}(T)$. Let the space for the state  parameter $X = C^0(\R^d_U;\R^k)$. We then define a transport function $F:X\to L^\infty(\R^d_U;\R_+)$ that maps the state parameter $u$ to the physical parameter $m$ (for Bayesian inversion). We will specify the choice of $F$ later in the numerical examples. Given $u \in X$, we define the slowness $m:\R^d_U\to (v^{-2}_{\max},v^{-2}_{\min}) \subseteq\R_+$ by $m = F(u)$, where the constants $v_{\min}$ and $v_{\max}$ are physical lower and upper bounds for the wave velocity. For each $j$, let $v_j \in L^2(\R^d_U;L^2(T))$ solve the following wave equation
\begin{equation}\label{eq:wave}
\begin{cases}
m(x)\frac{\partial^2 v_j}{\partial t^2}(x,t) - \Laplace v_j(x,t) = s_j(x,t) & (x,t) \in \R^d_U\times T,\\
\hfill v_j(x,0) = 0,\, \frac{\partial v_j}{\partial t}(x, 0 ) = 0   & x \in \R^d_U, \\
\hfill \nabla_x v_j(x,t)\cdot \mathbf{n} = 0 & x\in \partial \R^d_U.\\
\end{cases}
\end{equation}

Let $D_0\subset \R^d_U$ be a compact subset denoting the receiver locations; we assume $D_0$ is either finite or has positive Lebesgue measure, and equip it with either the counting measure or Lebesgue measure respectively. 
.

Given a bounded linear observation operator $\mathcal{O}:L^2(\R^d_U;L^2(T))\to L^2(D_0;L^2(T))$ we write $v_j^D = \mathcal{O}\circ v_j$ as the observable part of the solution.\footnote{The observation operator $\mathcal{O}$ will typically be restriction to the subset $D_0\times T$, though depending on the choice of $D_0$ may require mollification in space to ensure the operator is bounded.} Additionally, we assume the observable part of the solution to be essentially bounded and thus in $L^\infty(D_0;L^\infty(T))$. We concatenate the $v_j^D$ for the $N_s$ different source terms, writing the total amount of observable data $v^D \in Y_0 = L^\infty(D;L^\infty(T))$, where $D = D_0\times\{1,\ldots,N_s\}$. The forward mapping $\G$ is then defined as the mapping $u \mapsto v^D$.

 We first note that though the forward problem is defined on an infinite spatial domain, we are only interested in inversion on a compact subset -- numerically the forward problem will be restricted to this subset using ABC/PML to mimic the true physics \cite{engquist1977absorbing}. The state parameter and slowness function will hence be assumed to only be defined on this compact subset rather than all of $\R^d_U$. We verify that \cref{ass:wp} are satisfied by the above with this assumption in place:
\begin{itemize}
\item Assumption (i) will hold, assuming that the data is preprocessed to remove the zero-frequency component.

\item The continuous dependence of the map $\G$ in the $L^2$ norm with respect to the wave slowness $m(x) = v^{-2}(x)$ was addressed in~\cite[Theorem 2.8.3]{stolk2000modeling}. Assuming the transport map $F:X\to L^\infty$ is continuous almost-surely under the prior $\pi_0$ with respect to the $L^2$ norm, the required $\pi_0$-a.s continuity of the map $\G$ with respect to the state parameter $u$, i.e., Assumption (ii), will follow. If the map $F$ is defined pointwise so that $F(u)(x) = f(u(x))$ for some continuous bounded $f:\R^k\to\R_+$, the required continuity will follow for any choice of prior on $X$. In the case of a level set prior \cref{eq:level_set_prior0}, $F$ is continuous into $L^2$ at $u \in X$ if the set $\{x\,|\,u(x)=0\}$ has zero Lebesgue measure: this can be ensured to hold $\pi_0$-a.s. by choosing the measure $\nu_0$ in \cref{eq:level_set_prior0} to be supported on a space of $C^1$ functions, for example a sufficiently smooth Gaussian as discussed in \cref{sssec:gaussian_prior} \cite{iglesias2016bayesian}. The mixed level set prior \cref{eq:level_set_prior} leads to the required $\pi_0$-a.s. continuity with an analogous setup.

\item Since we have zero initial conditions and the source pulse $w(t)$ has a finite $H^1$ norm (and thus a finite $L^\infty$ norm), we assume that the $L^\infty$ energy estimates for the wave equation~\eqref{eq:wave} guarantee that $\G(u) < C$ where $C$ only depends on $\|w\|_{H^1(T)}, v_{\min}, v_{\max}$, and is independent of the model parameter $u$. As discussed in \cref{rem:m2bound}, since the bound on $\G$ is global, the condition that $S_{\Phi,r} < \infty$ in \cref{thm:stability} is true under weak conditions on the prior measure $\pi_0$ and density mapping $P_\sigma$ -- in particular, the choices of prior outlined in \cref{ssec:prior} and any  $\sigma$ with up to exponential growth/decay.
\end{itemize}

For this particular FWI problem, the temporal domain $T$ in our theoretical framework becomes a 1D domain. Therefore, one can efficiently calculate the four potentials, $\Phi_{L^2},\Phi_{H^{-1}},\Phi_M,\Phi_{W_2}$ as defined in~\eqref{eq:L2_potential}-\eqref{eq:W2_potential}. In particular, the calculation of the $W_2$ potential, $\Phi_{W_2}$, is equivalent to the so-called trace-by-trace approach in~\cite{yang2017application}, for which there is an explicit solution to the corresponding 1D optimal transport problem.

\section{Numerical Examples} \label{sec:numerical}

We illustrate the difference between the resulting distributions corresponding to the different potentials numerically for the FWI problem described in \cref{sec:fwi}. First, we present a simple constant-velocity example to illustrate the different structure among several likelihood functions and their corresponding (Gibbs) posterior distributions using a Gaussian prior. We then consider the recovery of a continuous velocity field using a Gaussian prior. Laplace approximations are first made to the posteriors, which correspond to second-order Taylor approximations of the posteriors around their modes. We look at the effect of perturbation of the data by noise uncorrelated in time on these Laplace approximations; we may compute various metrics between probability measures explicitly in the case of Gaussian. Moreover, we consider a mixed level set prior for the recovery of a salt model, where the velocity within the salt is unknown a priori. We again use Laplace approximations to provide approximations to the posteriors efficiently.

Throughout this section we fix the transformation map $F:X\to L^\infty$ in \cref{sec:fwi} to be given by $F(u)(x) = \alpha_-\tanh(u(x)) + \alpha_+$ and $\alpha_\pm = v_{\min}^{-2} \pm v_{\max}^{-2}$. Hence for any $u(x) \in \R$, the velocity $v(x) = 1/\sqrt{m(x)} = 1/\sqrt{F(u(x))}  \in (v_{\min},v_{\max})$. The hyperbolic tangent function is chosen as a smooth monotone mapping from the unbounded range of Gaussian processes into a positive bounded interval to match the physical meaning of the unknown parameter within the hyperbolic equation.

For the all numerical tests, the wave equation is solved on a rectangular domain and the boundaries are imposed with the absorbing boundary conditions~\cite{engquist1977absorbing} to approximate the unbounded domain and reduce reflections from the upper surface. The acoustic wave equation~\eqref{eq:wave} is solved numerically by a second-order finite difference scheme.

\subsection{Simple illustration of the posterior distributions}
We first illustrate the difference in the potential functions by plotting a slice of the unnormalized likelihood functions and their posteriors in the direction of constant wave speed. The reference velocity is $3$ km/s discretized on a rectangle domain $1.5$ km by $2$ km with a spatial spacing of $10$ m. The source pulse is a superposition of three Ricker wavelets of different phase shifts, centered at $20$ Hz. We equally distribute $8$ point sources at $z=50$ m and $200$ receivers at $z=1.4$ km. The observed data $y$ is recorded for $1.5$ s, containing transmissions only. Since these potentials have different scaling, we rescale them with their values evaluated at $v(x) \approx 1.395$ (i.e., $u(x) = 0$) for illustration purpose. We impose the same Gaussian prior with mean $v(x)\approx 2.63$ ($u(x)=-1$) for all three potentials shown in \cref{fig:simple_posterior}. 

The shape of the unnormalized likelihood functions highlight one significant advantage of the $W_2$ metric for FWI --- mitigating cycle-skipping issues. The unimodal feature in both the $W_2$ likelihood function and its posterior corresponds to the desirable convexity property in the deterministic FWI~\cite{yang2017application} that heavily replies on local optimization algorithms. In turn, it facilitates sampling the posterior for Bayesian inversion. On the other hand, the multimodal feature in the plots for $\dot{H}^{-1}$ and $L^2$ are unsurprising as they compare signals locally, failing to capture the large phase changes in the waveforms. As expected, the shapes improve in the posteriors once the prior information is imposed, regularizing the likelihood function. We remark that the unimodality of $W_2$ is unlikely to hold for general coefficient function $v(x)$, but its multimodality is expected to be much less severe than $\dot{H}^{-1}$ and $L^2$ based on abundance numerical evidence in deterministic FWI~\cite{yang2017application}.

\begin{figure}
\centering
\includegraphics[width=0.45\textwidth]{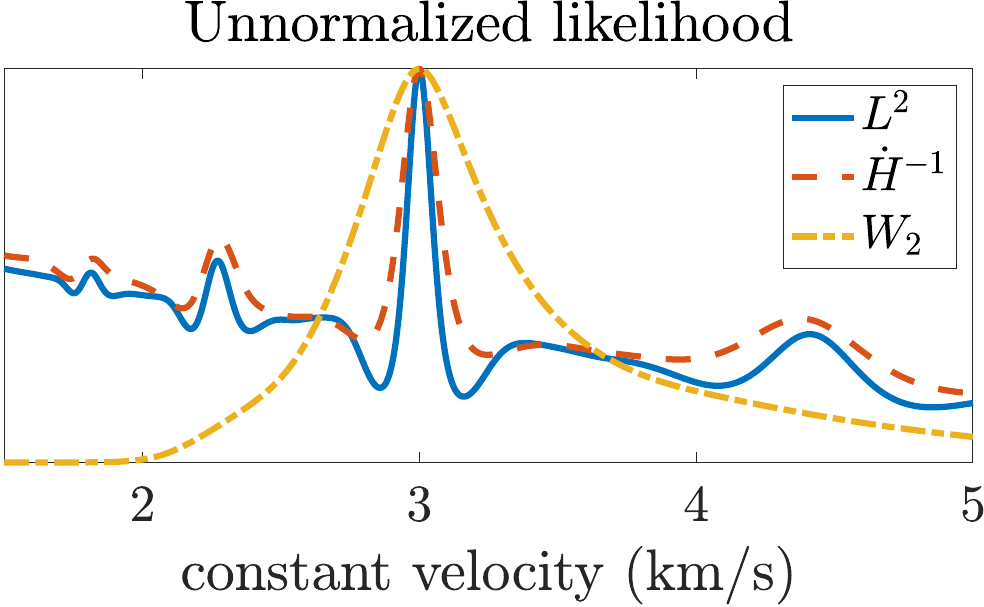}
\includegraphics[width=0.45\textwidth]{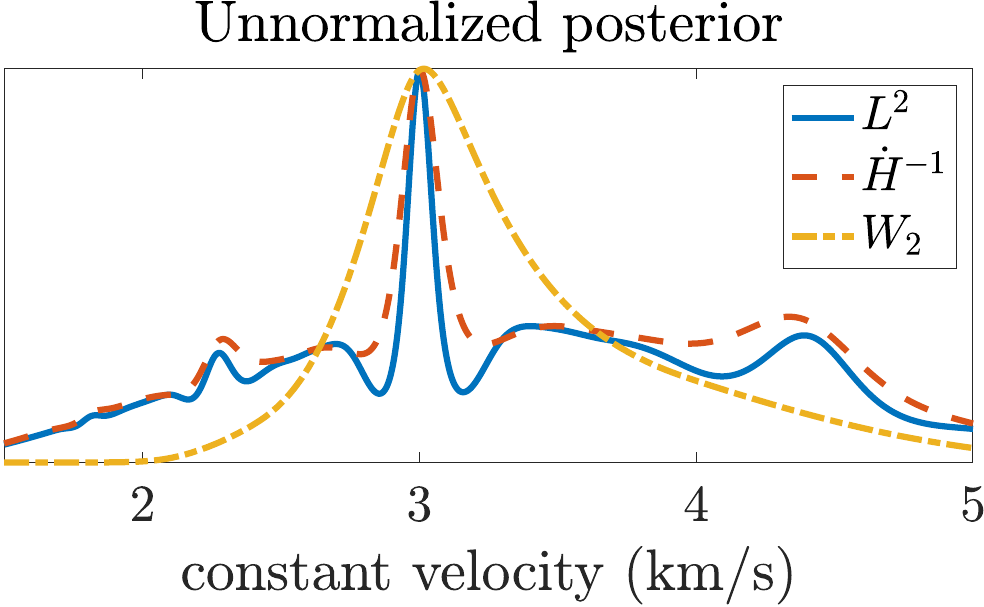}
\caption{(Left) An illustration of the unnormalized likelihood functions for potential $\Phi_{L^2}(u;y),\Phi_{\dot H^{-1}}(u;y),\Phi_{W_2}(u;y)$ by restricting to constant wave speed. (Right) The corresponding unnormalized posterior distributions with the same Gaussian prior.}
\label{fig:simple_posterior}
\end{figure}

\subsection{Continuous Velocity Field}
\label{ssec:num_cts}
In this subsection we consider recovery of a continuous field $m(x) = F(u(x))$. We place a Gaussian prior on $u$, and compare the distributions arising from each choice of loss function. \Cref{fig:truth_1} shows the true velocity field $v = 1/\sqrt{m}$ (0.93 km in depth and 2.31 km in width) along with $N_s = 6$ sources (Ricker wavelet centered at 15 Hz) and $77$ receivers equally distributed in the additional water layer (thickness 0.6 km) at $z=150$ m. The discretization of the wave equations is 30 m in space and 3 ms in time. The total recording time is 1.8 s. The state parameter $u = F^{-1}(m)$ is also shown.

We chose a relatively flat Gaussian prior on $u$ with mean $m_0$ given by a smoothed version of the true field, and covariance function given by the Mat\'ern kernel with parameters $\sigma = 0.7$, $\nu = 3$ and $\ell = 0.05$. This choice of $\sigma$ ensures that the prior on the velocity at each point is approximately uniformly distributed in the range $(v_{\min},v_{\max})$. The prior mean and standard deviation are shown in \cref{fig:prior_1}. The loss functions are all normalized such that $\Phi(m_0) = 1$, then inverse temperature parameter $\beta = 10^{7}$ is fixed -- that is, the potentials $\Phi(u)$ are replaced by $\beta\Phi(u)/\Phi(m_0)$. In practice $\beta$ will be found, for example hierarchically \cite{motamed2019wasserstein}, however we fix it here for a more direct comparison between the distributions; note however that due to the different natures of the likelihood functions, the distributions will not be directly comparable quantitatively.

We denote by $\cL_{\Phi}^{y}$ the Laplace approximation to $\pi_{\Phi}^y$, that is, $\cL_{\Phi}^y = N(m_\Phi,\cC_\Phi)$ where
\begin{align*}
m_\Phi &= \underset{u \in X}{\argmin}\,\Phi(u;y) + \frac{1}{2}\la u,\cC_0^{-1}u\ra,\quad \cC_\Phi^{-1} = \nabla_u^2\Phi(u_\Phi;y) + \cC_0^{-1}.
\end{align*}
In finite dimensions, this approximation arises by performing a second order Taylor approximation of the negative logarithm of the posterior Lebesgue density -- note that when the potential $\Phi$ is quadratic, such as arises when the forward map is linear and the noise is additive Gaussian, this approximation would be exact. Additionally, in the limit of large data or small noise, the posterior and Laplace approximation coincide \cite{schillings2019convergence}. Numerically we calculate the MAP estimates $m_\Phi$ using the L-BFGS algorithm, initialized at a smoothed version of the true field to somewhat avoid multi-modality issues when using the $L^2$ loss. We approximate the covariance $\cC_\Phi$ using the methodology described in \cite[\S 4.2]{isaac2015scalable}.
\begin{remark}
We could instead choose probe the posterior distribution directly using MCMC methods -- though the dimension of the field we are inferring is high, dimension robust methods are available \cite{beskos2017geometric}. However, the statistical performance of such methods is highly dependent on how informative the data is relative to the prior. In the setups we consider the data is highly informative, and so generating a large number of uncorrelated samples using MCMC would likely be computationally prohibitive. Additionally, in the cases of the $L^2$ and $\dot H^{-1}$ likelihoods the posterior is likely to be highly multimodal, as illustrated in the previous subsection, which could lead to additional issues with exploring the full posterior; this would be less of an issue with the $W_2$ likelihood however.
\end{remark}

We consider both clean observations $y = \G(u)$ and noisy observations $y' = \G(u) + \eta$, where $\eta$ has the form
\[
\eta(t,x) = \left(1+\frac{y(t,x)}{\|y\|_{L^\infty}}\right)\eta_0(t)
\]
and $\eta_0$ is temporal white noise; this form of noise is similar to that considered in \cite{yangletter} and does not lie in $L^2$ in the continuous time limit. The resulting SNR is 14.6 dB.

In \cref{fig:post_clean,fig:post_noisy} we show the results of the inversion with the clean and noisy data respectively for the three potentials $\Phi_{L^2}$, $\Phi_{\dot{H}^{-1}}$ and $\Phi_{W_2}$\footnote{We do not consider the potential $\Phi_M$ in this section as it is very sensitive to the choice of mapping $\P_\sigma$; the potential $\Phi_{W_2}$ is also sensitive to this choice. However, the previous study allows for it to be chosen intuitively.}. For the clean data, both the $L^2$ and $W_2$ likelihoods lead to good reconstruction in the mean, though the standard deviation fields are different with the $W_2$ likelihood leading to higher uncertainty in the regions where the velocity is higher. The $\dot{H}^{-1}$ reconstruction is less accurate in the mean but has higher uncertainty to account for this. For the noisy reconstruction, there is a stark difference for the $L^2$ reconstruction, with a poor quality noisy estimate obtained. The $W_2$ reconstruction is only slightly worse than that with the clean data, and again uncertainty is highest in regions with the highest velocity. The $\dot{H}^{-1}$ reconstruction is essentially unchanged from the clean data, though the average uncertainty has decreased slightly. 

These sensitivities can be quantified using the Wasserstein distance between the Laplace approximations, which has an analytic form. We do not consider the Hellinger distance here, as considered in theory -- the resulting Laplace approximations are close to being singular, and so the distances between the approximations are close to the maximum value of 1, whereas the Wasserstein distance is finite even for singular measures. In \cref{tab:distances} we show $d_{\mathrm{Wass}}(\cL_\Phi^y,\cL_\Phi^{y'})$ for each of the potentials $\Phi$ considered, for the fixed noisy realization of the data $y'$ used in \cref{fig:post_noisy}. This illustrates the balance needed between stability and accuracy: $\Phi_{\dot{H}^{-1}}$ leads to a more stable but less accurate posterior than $\Phi_{W_2}$, and conversely $\Phi_{L^2}$ leads to a similarly accurate but much less stable posterior than $\Phi_{W_2}$.

\begin{figure}
\centering
\includegraphics[width=0.8\textwidth,trim=3cm 0cm 3cm 0cm,clip]{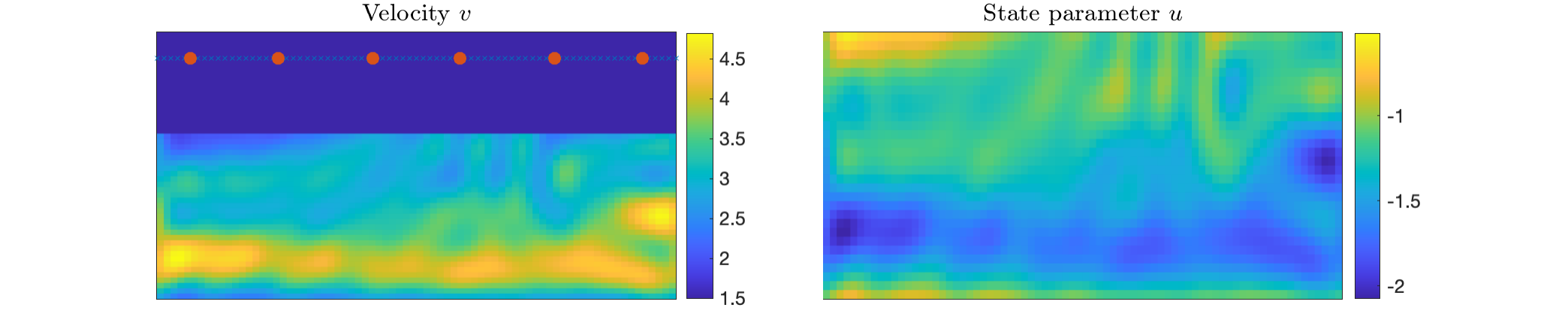}
\caption{(Left) The true continuous velocity field $v$ we aim to infer in \cref{ssec:num_cts}, the location of the six sources $\{s_j\}$ and the set $D_0$ on which the solution is measured at each time. (Right) The state parameter $u = F^{-1}(1/v^2)$, restricted to the domain below the water level.}
\label{fig:truth_1}
\end{figure}

\begin{figure}
\centering
\includegraphics[width=0.8\textwidth,trim=3cm 0cm 3cm 0cm,clip]{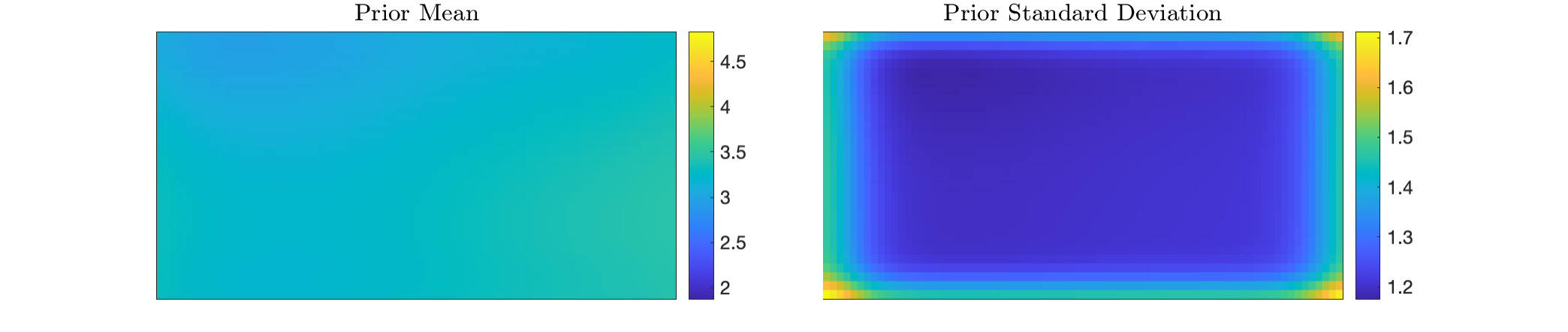}
\caption{The pushforward of the choice of prior mean $m_0(x)$ and standard deviation $\sqrt{c_0(x,x)}$ on $u$ to $v=1/\sqrt{F(u)}$, used in \cref{ssec:num_cts}.}
\label{fig:prior_1}
\end{figure}

\begin{figure}
\centering
\includegraphics[width=0.8\textwidth,trim=3cm 0cm 3cm 0cm,clip]{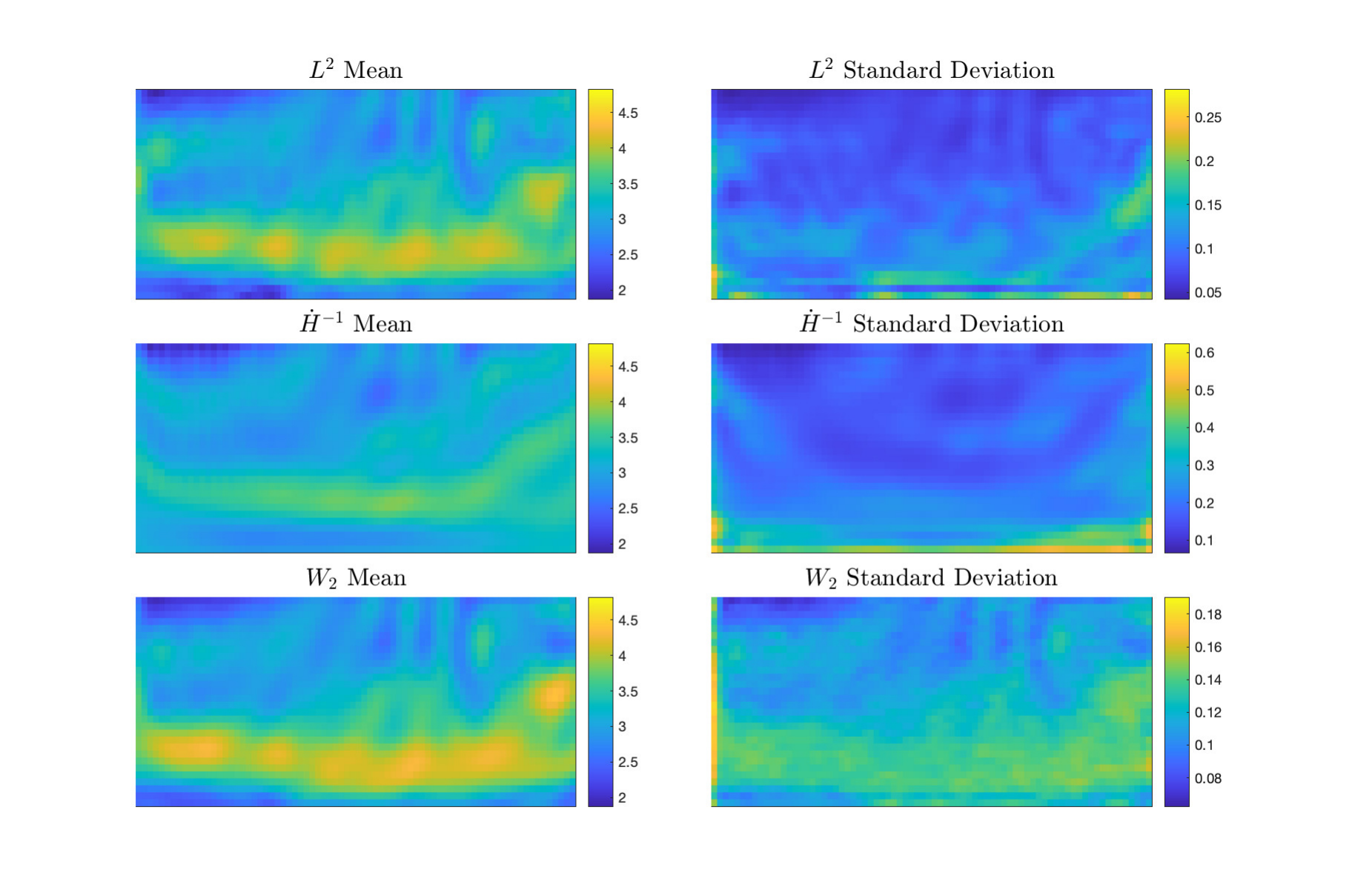}
\caption{The means (left) and standard deviations (right) of the Laplace approximations arising using each of the loss functions $\Phi_{L^2}, \Phi_{\dot H^{-1}}$ and $\Phi_{W_2}$ with clean data $y$ in \cref{ssec:num_cts}.}
\label{fig:post_clean}
\end{figure}

\begin{figure}
\centering
\includegraphics[width=0.8\textwidth,trim=3cm 0cm 3cm 0cm,clip]{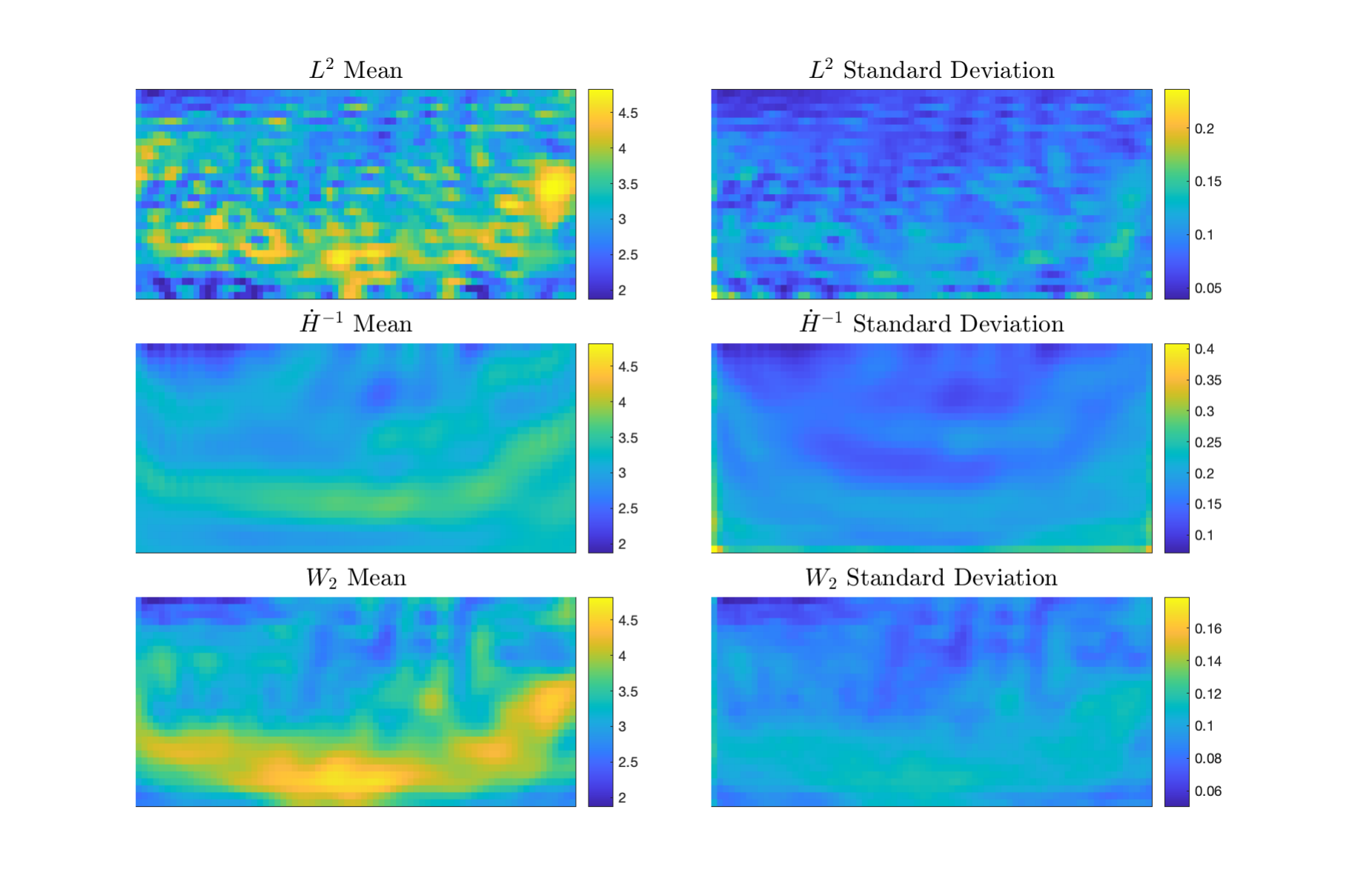}
\caption{The means (left) and standard deviations (right) of the Laplace approximations arising using each of the loss functions $\Phi_{L^2}, \Phi_{\dot H^{-1}}$ and $\Phi_{W_2}$ with noisy data $y'$ in \cref{ssec:num_cts}.}
\label{fig:post_noisy}
\end{figure}

\begin{table}[]
\centering
\caption{Wasserstein distances between the Laplace approximations for the Gibbs posteriors considered in \cref{ssec:num_cts} when perturbed by noise uncorrelated in time.}
\label{tab:distances}
\bgroup
\def\arraystretch{1.5}
\begin{tabular}{l|lll}
 & $\Phi_{L^2}$ & $\Phi_{\dot{H}^{-1}}$ & $\Phi_{W_2}$\\ \hline
$d_{\mathrm{Wass}}(\cL_\Phi^y,\cL_\Phi^{y'})$        & 9.34 & 2.83 & 6.60\\
\end{tabular}
\egroup
\end{table}

\subsection{Salt Model}
\label{ssec:num_salt}
We now consider the recovery of a constant region, corresponding to salt, over a continuous background velocity. The true velocity field (1.95 km in width and 1.11 km in depth) is shown in \cref{fig:truth_salt} along with the source and receiver locations; we increase the number of sources to $N_s = 11$ (Ricker wavelet centered at 10 Hz) in this example while the number of receivers is $65$. They are equally distributed in the additional water layer (thickness 300 m) at $z=150$ m. The discretization of the forward wave equation is 30 m in space and 2 ms in time. The total recording time for the wave propagation is 2 s.
We consider both a Gaussian prior as in the previous subsection and the mixed level set method outlined in \cref{ssec:prior}, choosing the Gaussian field underlying the level set function to have a Mat\'ern covariance prior with parameters $\sigma = 10$, $\nu = 2$ and $\ell = 0.25$, and fixing the continuous background field. The salt velocity is assumed unknown in the mixed level set model, and a relatively flat prior of $N(3,4^2)$ is placed on it; for reference, the true value is 4.79. Numerically the indicator functions in the level set prior \cref{eq:level_set_prior} are replaced with smoothed versions to allow for the computation of gradients.  The potentials are normalized as in the previous subsection, and we choose a lower inverse temperature parameter $\beta = 10^4$ to allow for more influence from the prior. Additionally, we do not corrupt the observations by any noise in this example.

In \cref{fig:post_salt} we show the output when using the potentials $\Phi_{L^2}$, $\Phi_{W_2}$ when using a Gaussian prior, as well as when using $\Phi_{W_2}$ with the mixed level set prior. When a Gaussian prior is used, the $W_2$ reconstruction recovers more of the salt than the $L^2$ reconstruction, though the uncertainty for both is similar. On the other hand, using the level set method, much more of the salt is recovered, and the true value of the salt velocity is recovered almost exactly. The standard deviation illustrates the uncertainty in the boundary location, as well as small areas within the salt where the reconstruction is less accurate.

\begin{figure}
\centering
\includegraphics[width=\textwidth,trim=3cm 0cm 3cm 0cm,clip]{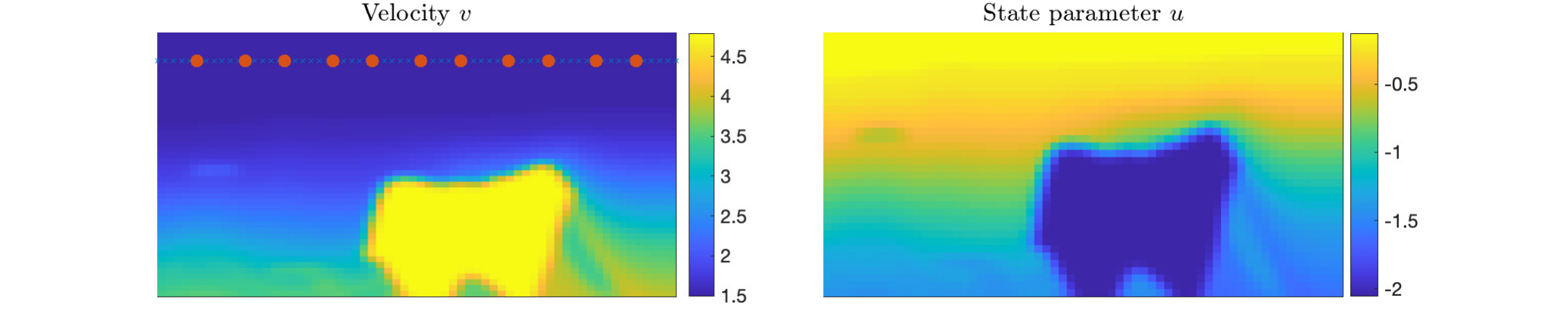}
\caption{(Left) The true salt velocity field $v$ we aim to infer \cref{ssec:num_salt}, the location of the eleven sources $\{s_j\}$ and the set $D_0$ on which the solution is measured at each time. (Right) The state parameter $u = F^{-1}(1/v^2)$, restricted to the domain below the water level.}
\label{fig:truth_salt}
\end{figure}

\begin{figure}
\centering
\includegraphics[width=\textwidth,trim=3cm 0cm 3cm 0cm,clip]{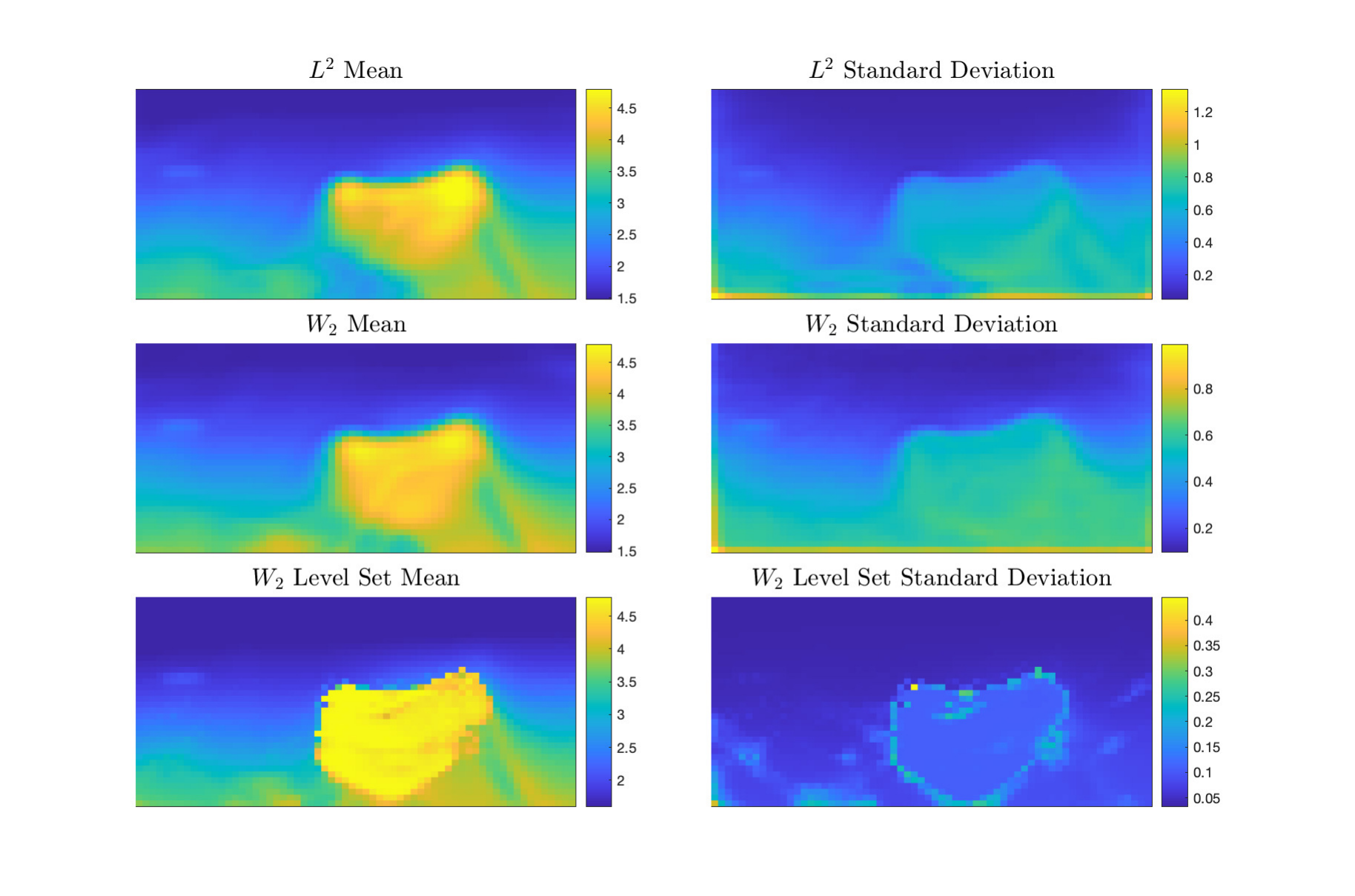}
\caption{The means (left) and standard deviations (right) of the Laplace approximations arising using each of the loss functions $\Phi_{L^2}, \Phi_{\dot H^{-1}}$ and $\Phi_{W_2}$ with clean data $y$ in \cref{ssec:num_salt}.}
\label{fig:post_salt}
\end{figure}


\section{Conclusion} \label{sec:conclusion}
In this paper, we have analyzed the Bayesian properties of four potentials that originate from the objective functions for deterministic full-waveform inversion. We have demonstrated that two of the potentials are equipped with explicit noise models, while the other two can be seen as approximately having state-dependent multiplicative noise that leads to well-defined and stable Gibbs posteriors. As a main component of the paper, our stability results show that posteriors that are based on the Wasserstein loss and the negative Sobolev norm are more stable with respect to high-frequency noise on the data. Numerical inversions under a Bayesian setting also illustrate the robustness and advantages of choosing the Wasserstein-based potentials for seismic inversion.

\section*{Acknowledgement}
This work is partially supported by National Science Foundation grant DMS-1913129, and the U.S. Department of Energy Office of Science, Advanced Scientific Computing Research (ASCR), Scientific Discovery through Advanced Computing (SciDAC) program. We thank Prof. Georg Stadler and Prof. Andrew Stuart for constructive discussions.

\bibliographystyle{plain}
\bibliography{main}

\end{document}